\RequirePackage{cmap} 
\documentclass[12pt]{amsart}
\usepackage[utf8]{inputenc}
\usepackage[T2A]{fontenc}
\usepackage{amsmath,amssymb,amsthm,blkarray,graphicx}
\usepackage[a4paper,hmargin=2.5cm,vmargin=2.5cm]{geometry}
\usepackage[english,russian]{babel}
\usepackage[mathscr]{euscript}
\usepackage{tikz}
\usepackage{tikz-cd}
\usetikzlibrary{arrows.meta,decorations.markings}
\usepackage[section]{placeins}

\tikzset{middlearrow/.style={
        decoration={markings,
            mark= at position 0.55 with {\arrow{#1}} ,
        },
        postaction={decorate}
    }
}
\definecolor{darkspringgreen}{rgb}{0.09, 0.45, 0.27}

\usepackage[pdftex,bookmarks=false,colorlinks=true,citecolor=darkspringgreen,debug=true,
  naturalnames=true,pdfnewwindow=true]{hyperref}

\newtheorem{thm}{Theorem}[subsection]

\newtheorem{lem}[thm]{Lemma}
\newtheorem{prop}[thm]{Proposition}

\theoremstyle{definition}
\newtheorem{defn}[thm]{Definition}

\theoremstyle{remark}
\newtheorem{rem}[thm]{Remark}


\newcommand{\nc}{\newcommand}
\nc{\on}{\operatorname} \nc{\wh}{\widehat}
\nc\ol{\overline} \nc\ul{\underline} \nc\wt{\widetilde}
\nc{\Hom}{\operatorname{Hom}} \nc{\reg}{\operatorname{reg}} \nc{\diag}{\operatorname{diag}}
\nc{\red}[1]{{\color{red}#1}}

\def\arxiv#1{\href{http://arxiv.org/abs/#1}{\tt arXiv:#1}} 

\numberwithin{equation}{subsection}

\begin{document}

\author{Ivan Motorin}
\address{Skolkovo Institute of Science and Technology}
\email{ivan.motorin@mail.ru}
\title[Resolution of the odd nilpotent cone of some Lie superalgebras]
      {Resolution of singularities of the odd nilpotent cone of orthosymplectic Lie superalgebras}

\selectlanguage{english}
\begin{abstract}
  We construct a Springer-type resolution of singularities of the odd nilpotent cone of the
  orthosymplectic Lie superalgebras $\mathfrak{osp}(m|2n)$.
\end{abstract}
\maketitle

\section{Introduction}

\subsection{}
It is impossible to list all the applications of the Grothendieck-Springer resolution of the
nilpotent cone of a semisimple Lie algebra. The theory of Lie superalgebras opens still wider
perspectives since there are many conjugacy classes of Borel subalgebras and hence many nonisomorphic
flag varieties. In the case of orthosymplectic Lie superalgebras $\mathfrak{osp}(m|2n)$,
C.~Gruson and S.~Leidwanger~\cite{GL} introduced the notion of {\em mixed} Borel subalgebras (having
the maximal possible number of odd simple roots) and studied the corresponding Grothendieck-Springer
resolutions of the odd nilpotent cone in the case $m=2n,2n+1$.

Their results were used in the computation of the finite dimensional irreducible characters of
the orthosymplectic Lie superalgebras~\cite{GL} and in the computation of the orthosymplectic
Kostka polynomials~\cite{BFT}. We extend the results of~\cite{GL} to all cases of $m,n$.
   
More precisely, we consider $\mathfrak{osp}(m|2n)$ algebra with associated vector spaces $V_0, V_1,\allowbreak \dim(V_0)=m, \dim(V_1)=2n$ equipped with orthogonal and symplectic forms respectively. We fix identifications $\alpha\colon  V_0 \rightarrow V_0^*, \beta\colon  V_1 \rightarrow V_1^*$ via the
corresponding forms. Now we can consider following elements of $\mathfrak{osp}(m|2n)$: let $A\in \Hom(V_0,V_1)$, then $A\oplus A^* \in \mathfrak{osp}(m|2n)$ where $A^* =\alpha \circ A^t \circ \beta^{-1}$, if $AA^* \in \mathfrak{sp}(V_1)$ is nilpotent (or equivalently $A^*A \in \mathfrak{so}(V_0)$ is nilpotent) then we call $A \oplus A^*$ nilpotent.\\
One can consider the variety of complete self-orthogonal flags in $V_0$ and $V_1$ (a complete self-orthogonal flag in $V_0,V_1$ is a maximal filtration such that $F_{0}^{(i)}\perp F_{0}^{(m-i)},F_{1}^{(i)}\perp F_{1}^{(2n-i)}$) and nilpotent operators $A$ respecting the flags
(in a sense explained in~Definition~\ref{defn1}), i.e.\ we set
\begin{equation}
    \Tilde{\mathcal{N}} = \{(A,F_0,F_1)\in (\mathcal{N}, \mathcal{B}_0',\mathcal{B}_1)| A\in \mathcal{N}_{F_0,F_1}\},
\end{equation}
where $\mathcal{B}_1,\mathcal{B}_0'$ denote the variety of complete self-orthogonal flags in $V_1$
and a connected component of the variety of such flags in $V_0$ respectively
(recall that the variety $\mathcal{B}_0$ of complete self-orthogonal flags in $V_0$
consists of~2 connected components if $m$ is
even), and $\mathcal{N}_{F_0,F_1}$ is the odd part of the mixed Borel subalgebra
of $\mathfrak{osp}(m|2n)$ compatible with the flags $F_0^{(\bullet)},F_1^{(\bullet)}$.
It has been proved in \cite{GL} that $pr_1\colon  \Tilde{\mathcal{N}} \rightarrow \mathcal{N}$ is a resolution of singularities for $m=2n,2n+1$. We extend the proof for $m=2n-1,2n+2$, and observe that
the same construction does not give a resolution of singularities for any other values of $m$. However, in other cases we construct a similar resolution making use of the partial
self-orthogonal flags. This is a particular case of the general construction of~\cite{HE}.

Also, in~\S\ref{sec3} we construct a Weierstra\ss\ section to the odd nilpotent cone in an
orthosymplectic Lie superalgebra. Additionally, we study the relation between the regularity property of
odd elements of an orthosymplectic Lie superalgebra and the regularity property of their
self-supercommutators. Finally, in~\S\ref{sec4} we apply the results of~\cite{DP} to find the
lower bound that guarantees the higher cohomology vanishing of dominant line bundles on our
resolutions.

\subsection{Acknowledgments}
This note arose from solution of exercises of Michael Finkelberg's course in Invariant Theory
https://math.hse.ru/Invariant\_Theory\_finkel2021spring. I am grateful to him for interesting
discussions. I am also indebted to the anonymous referee for very useful suggestions that allowed
to sharpen the main result. Many thanks to Vera Serganova for suggesting the idea of considering partial self-orthogonal flags for a resolution.\\
Section~\ref{sec4} was written during my stay in the Ben-Gurion University, I am grateful to Inna Entova for her help during this time period and to Alexander Élashvili for his remark on the work of Hesselink. \\
The author was partially supported by ISF 711/18 grant (PI Inna Entova-Aizenbud) and BSF grant 2019694 (PIs Inna Entova-Aizenbud and Vera Serganova)

    \section{Grothendieck-Springer resolution of the odd nilpotent cone}

    \subsection{Recollections from elementary geometry}
    For the reader's convenience we recall the following basic facts.
    
    \begin{lem}
      \label{lem1}
      The stabilizer of a given complete self-orthogonal flag $F_0$ or $F_1$ under the action of $SO(V_0)$ or $Sp(V_1)$ correspondingly is a Borel subgroup.
      \end{lem}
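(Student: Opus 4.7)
The plan is to exhibit the stabilizer explicitly in a well-chosen basis and recognize it as the standard Borel subgroup of $SO(V_0)$ or $Sp(V_1)$. I will treat the two cases uniformly, writing $G$ for $SO(V_0)$ (resp.\ $Sp(V_1)$), $V$ for $V_0$ (resp.\ $V_1$), and $F$ for the flag.

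First, I would produce a basis $(e_1,\dots,e_n,\,(e_0,)\,f_n,\dots,f_1)$ of $V$ adapted simultaneously to the form and to the flag: the $e_i,f_j$ form a hyperbolic (symplectic/orthogonal) basis with $\langle e_i,f_j\rangle=\delta_{ij}$ and the optional middle vector $e_0$ (only in odd-dimensional orthogonal case) is orthogonal to everything else with $\langle e_0,e_0\rangle=1$; moreover $F^{(k)}=\on{span}(e_1,\dots,e_k)$ for $k\le n$. Constructing such a basis amounts to lifting a basis of the maximal isotropic piece $F^{(n)}$, using the non-degeneracy of the form to pick dual vectors $f_j$ inside the orthogonal complements prescribed by the self-orthogonality condition $F^{(i)}=(F^{(m-i)})^{\perp}$, and then observing that the remaining pieces of the flag $F^{(n+j)}$ are automatically forced to equal $F^{(n)}\oplus\on{span}(f_n,\dots,f_{n-j+1})$ (together with $e_0$ in the odd orthogonal case) by self-orthogonality and the complete-flag hypothesis.

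In this basis the stabilizer of the full flag inside $GL(V)$ is the subgroup $B_{GL}$ of upper-triangular matrices, so the stabilizer of $F$ in $G$ is $B:=B_{GL}\cap G$. A direct matrix inspection shows $B$ is the subgroup of upper triangular elements of $G$: these are precisely the elements preserving the maximal flag of isotropics and the induced flag on the dual side, and they form the standard Borel subgroup of $G$ in its usual presentation (connected, solvable, of the correct dimension $\dim G-\dim \mathcal{B}$, where $\dim\mathcal{B}$ equals the number of positive roots). Since Borel subgroups of $G$ are conjugate to this standard one, $B$ is a Borel, proving the claim.

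The main obstacle is essentially bookkeeping: one has to make sure that the chosen hyperbolic basis really exists and that no parity or connected-component issue intervenes. The connected-component caveat (the two components of the orthogonal flag variety in even dimension, already mentioned in the introduction) is handled by the fact that we have fixed a single flag $F_0$, hence a single component, and the stabilizer inside the connected group $SO(V_0)$ is intrinsically defined; the same basis-selection procedure works in both components. Once the adapted basis is in hand, the identification of $B_{GL}\cap G$ with a standard Borel is routine.
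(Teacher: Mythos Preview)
The paper does not actually supply a proof of this lemma: it appears in a subsection titled ``Recollections from elementary geometry'' with the preamble ``For the reader's convenience we recall the following basic facts,'' and the statement is left unproved. Your argument is correct and is precisely the standard one: pick a hyperbolic basis adapted to the flag, observe that the stabilizer in $G$ is the group of upper-triangular elements of $G$ in that basis, and identify this with the standard Borel. There is nothing to compare against; your write-up would serve perfectly well as the omitted justification.
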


\begin{lem}
  \label{lem2}
  The variety $\mathcal{B}_0$ of complete self-orthogonal flags in $V_0$ has two connected components if m is even.
\end{lem}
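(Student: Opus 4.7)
The plan is to reduce the statement to the classical two-component theorem for the orthogonal Grassmannian of maximal isotropic subspaces. Write $m=2k$ and let $\mathrm{OG}_k(V_0)$ denote the variety of $k$-dimensional isotropic subspaces of $V_0$. I would consider the ``middle piece'' map
\[
  \pi\colon \mathcal{B}_0 \longrightarrow \mathrm{OG}_k(V_0), \qquad F_0^{(\bullet)} \longmapsto F_0^{(k)}.
\]
It is well-defined because the self-orthogonality condition at $i=k$ reads $F_0^{(k)}\perp F_0^{(k)}$, forcing $F_0^{(k)}$ to be isotropic of half dimension, hence maximal isotropic.

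First I would identify the fibres. Fix $W\in\mathrm{OG}_k(V_0)$. A complete self-orthogonal flag with $F_0^{(k)}=W$ is determined by its lower half $0\subsetneq F_0^{(1)}\subsetneq\cdots\subsetneq F_0^{(k)}=W$, which is an arbitrary complete flag in $W$; the upper half is then forced by $F_0^{(k+i)}=(F_0^{(k-i)})^{\perp}$. Thus the fibre is the full flag variety of $W$, isomorphic to $GL(W)/B$, which is irreducible and connected.

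Second, I would invoke the classical fact that $\mathrm{OG}_k(V_0)$ has exactly two connected components when $\dim V_0=2k$, distinguished by the parity of $k-\dim(W\cap W_0)$ for any fixed reference $W_0$. A self-contained argument in the spirit of \lemref{lem1}: by Witt's theorem $O(V_0)$ acts transitively on $\mathrm{OG}_k(V_0)$ with stabilizer a connected parabolic; the index-two subgroup $SO(V_0)$ therefore has at most two orbits, each a connected homogeneous space, and exhibiting two explicit maximal isotropics whose intersection has dimensions of opposite parity shows that both cosets of $SO(V_0)$ in $O(V_0)$ act non-trivially, yielding exactly two components.

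Combining the two steps, $\pi$ is a surjective morphism with geometrically connected fibres, so $\mathcal{B}_0$ and $\mathrm{OG}_k(V_0)$ have the same number of connected components, namely two. The main obstacle is the two-component statement for the orthogonal Grassmannian: identifying the fibres and the final conclusion are formal, while this geometric input is precisely what distinguishes the even case from the odd one, where $O(V_0)=SO(V_0)\times\{\pm 1\}$ already forces the analogous Grassmannian to be a single $SO(V_0)$-orbit.
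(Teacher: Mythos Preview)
The paper does not actually prove this lemma: it appears under the heading ``Recollections from elementary geometry'' with the preface ``For the reader's convenience we recall the following basic facts,'' and no argument is given. So there is no proof in the paper to compare yours against.

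Your argument is correct and is the standard one. Two small points of polish. First, the phrase ``both cosets of $SO(V_0)$ in $O(V_0)$ act non-trivially'' is not quite the right formulation; what you need is that the $O(V_0)$-stabilizer of any $W\in\mathrm{OG}_k(V_0)$ already lies in $SO(V_0)$ (an element fixing $W$ acts on $W$ and on $V_0/W\cong W^*$ with reciprocal determinants, hence has determinant~$1$ on $V_0$), so the $SO(V_0)$-orbits coincide with the $SO(V_0)$-cosets in $O(V_0)/P$, giving exactly two. Second, to pass from ``connected fibres'' to ``same number of components'' you are tacitly using that $\pi$ is proper (both source and target are projective), or, more cleanly, that $\pi$ is a Zariski-locally trivial fibration by homogeneity under $SO(V_0)$; it would be worth saying one of these explicitly.
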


\begin{rem} Note also that the flag $F$ for $SO(2n)$ is uniquely determined by its $F^{(i)},i\neq n$ components and the choice of connectedness component in the variety of self-orthogonal flags. This is simply because $F^{(n)}$ is obtained by adding a vector of the form $ae_n+be_{n+1}$ to $F^{(n-1)}$, but we have a condition that
\begin{equation}
    (ae_n+be_{n+1},ae_n+be_{n+1})=0,
\end{equation}
so either $a=0$ or $b=0$.
\end{rem}

\subsection{Some invariant theory}
It has been proved in~\cite{GL} that the odd nilpotent cone $\mathcal{N}$ of an orthosymplectic Lie
superalgebra $\mathfrak{osp}(m|2n)$ is irreducible. Also recall that Kraft and Procesi~\cite{KP} gave a full description of orthosymplectic orbits in the null cone via $ab$-diagrams and from this description we know that there are finitely many orbits. It means that there exists an open orbit. We can also tell exactly which orbit has maximal dimension since the dimension of the null cone can be inferred from the following lemma.\\
\begin{lem}
  \label{lem3}
  The maps
\begin{equation}
    q_0\colon  \Hom(V_0,V_1) \rightarrow \mathfrak{so}(V_0), A \mapsto A^* A; q_1\colon  \Hom(V_0,V_1) \rightarrow \mathfrak{sp}(V_1), A \mapsto AA^*
\end{equation}
realize isomorphisms $q_0\colon  \Hom(V_0,V_1)/\!\!/SO(V_0)\times Sp(V_1) \rightarrow \mathfrak{so}(V_0)/\!\!/SO(V_0) $ if $m\le 2n+1$ and $q_1\colon  \Hom(V_0,V_1)/\!\!/SO(V_0)\times Sp(V_1) \rightarrow \mathfrak{sp}(V_1)/\!\!/Sp(V_1) $ if $m\ge 2n+1$.
\end{lem}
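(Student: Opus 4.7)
The plan is to apply the First and Second Fundamental Theorems (FFT, SFT) of classical invariant theory for the symplectic and orthogonal groups. Both $q_0$ and $q_1$ are equivariant for the adjoint action on their respective targets, so I would reduce the problem to verifying that the pullbacks $q_0^*$ and $q_1^*$ realize isomorphisms on the coordinate rings of invariants of the other factor; the outer quotient by the remaining group is then formal.

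For $q_0$ (case $m\le 2n+1$): first I would fix a basis of $V_0$ to identify $\Hom(V_0,V_1)\cong V_1^{\oplus m}$, so that $A$ corresponds to an $m$-tuple $(Ae_1,\ldots,Ae_m)$. By the FFT for $Sp(V_1)$, the invariant ring $\mathbb{C}[V_1^{\oplus m}]^{Sp(V_1)}$ is generated by the symplectic pairings $\omega(Ae_i,Ae_j)$, which form an antisymmetric $m\times m$ matrix. Via $\alpha$ this matrix is identified with an element of $\mathfrak{so}(V_0)$, and a short computation from the definitions shows that the map so obtained coincides with $q_0\colon A\mapsto A^*A$. Next I would invoke the SFT for $Sp(V_1)$, stating that the only relations among the $\omega(Ae_i,Ae_j)$ are the Pfaffians forcing the pairing matrix to have rank $\le 2n=\dim V_1$. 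Since an antisymmetric $m\times m$ matrix has rank at most $2\lfloor m/2\rfloor$, this bound is automatic whenever $m\le 2n+1$, so the SFT imposes no relation and $q_0^*\colon\mathbb{C}[\mathfrak{so}(V_0)]\xrightarrow{\sim}\mathbb{C}[\Hom(V_0,V_1)]^{Sp(V_1)}$. Equivariance of $q_0$ under $SO(V_0)$ (acting on the target by conjugation) then lets me take further $SO(V_0)$-invariants on both sides to obtain the claimed isomorphism of GIT quotients.

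For $q_1$ (case $m\ge 2n+1$): dually, I would fix a basis of $V_1$ to view $A^*$ as a $2n$-tuple of vectors in $V_0$. The FFT for $O(V_0)$ identifies $\mathbb{C}[V_0^{\oplus 2n}]^{O(V_0)}$ with the polynomial ring on the symmetric pairings $(A^*f_i,A^*f_j)_{V_0}$, whose matrix corresponds via $\beta$ to $AA^*=q_1(A)\in\mathfrak{sp}(V_1)$. Two orthogonal-specific issues would then have to be resolved: (i) the SFT for $O(V_0)$ imposes rank $\le m$ on the symmetric matrix, which is vacuous for a $2n\times 2n$ matrix provided $m\ge 2n$; and (ii) the passage from $O(V_0)$- to $SO(V_0)$-invariants introduces extra determinantal generators built from $m$-subsets of the $2n$ vectors, which only exist when $2n\ge m$. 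For $m\ge 2n+1$ both conditions fall in our favor---SFT is vacuous and $SO(V_0)$-invariants coincide with $O(V_0)$-invariants---so $q_1^*\colon\mathbb{C}[\mathfrak{sp}(V_1)]\xrightarrow{\sim}\mathbb{C}[\Hom(V_0,V_1)]^{SO(V_0)}$; a further $Sp(V_1)$-quotient on both sides completes the proof.

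The hard part will be the careful parity accounting at the boundary, which is precisely what determines the tight ranges. The bound $m\le 2n+1$ for $q_0$ is sharp because at $m=2n+2$ an antisymmetric matrix can attain rank $m>2n$, so the Pfaffian SFT relation genuinely cuts down the invariants; symmetrically, $m\ge 2n+1$ is sharp for $q_1$ because at $m=2n$ the exceptional determinantal $SO(V_0)$-invariants of $2n$ vectors in $V_0$ appear and break the isomorphism. A secondary but essential step is the routine bookkeeping identifying $q_0(A)=A^*A$ with the antisymmetric matrix $(\omega(Ae_i,Ae_j))$ (and analogously $q_1(A)=AA^*$ with the symmetric matrix of $V_0$-pairings) from the definitions of $\alpha$, $\beta$, and $A^*$ stated in the introduction.
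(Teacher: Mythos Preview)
Your argument is correct and runs in close parallel to the paper's: both proofs identify the FFT generators for $Sp(V_1)$ (resp.\ $O(V_0)$) with the pullbacks under $q_0^*$ (resp.\ $q_1^*$) of the linear coordinates on $\mathfrak{so}(V_0)$ (resp.\ $\mathfrak{sp}(V_1)$), and both handle the $SO$ versus $O$ discrepancy by noting that the extra determinantal invariants vanish once $2n<m$. The one genuine difference is in how injectivity of $q_i^*$ is obtained. You invoke the Second Fundamental Theorem and observe that the Pfaffian/minor relations are vacuous in the stated ranges, whereas the paper instead exhibits explicit preimages: for $m\le 2n+1$ it writes down vectors $v_i\in V_1$ realizing any skew canonical form as $A^*A$, and for $m\ge 2n+1$ it chooses $v_i\in V_0$ so that $AA^*$ sweeps out the Cartan subalgebra, concluding that $q_0$ is surjective (resp.\ $q_1$ is dominant) and hence that the pullback on coordinate rings is injective. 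Your SFT route is cleaner and makes the sharpness of the bounds $m\le 2n+1$ and $m\ge 2n+1$ transparent (the relations become nontrivial exactly at $m=2n+2$ and $m=2n$ respectively); the paper's explicit construction, on the other hand, gives concrete sections that are reused later (e.g.\ in building the Weierstra\ss\ section in~\S\ref{sec3}).
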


\begin{proof} let $m\le 2n+1$ we can take an element of the following form $A=e_1 \otimes v_1 + \dots +e_m \otimes v_m$ where $e_i$ is orthonormal basis in $V_0$ and $v_i$ are some vectors from $V_1$ then it is easy to check that 
\begin{equation}
   A^tA= \sum_{1\le i,j \le m} \langle v_i,v_j \rangle E_{ij} 
\end{equation}
Any skew-symmetric matrix can be presented in a canonical form (with parameters $\lambda_1,\dots, \lambda_{\lfloor m/2 \rfloor}$) via the action of $SO(V_0)$. Because $m\le 2n+1$ we can always find such $v_i$ for a canonical form. Now we have to show the surjectivity of the map $q_0^*\colon  \mathbb{C}[\mathfrak{so}(V_0)^*] \rightarrow \mathbb{C}[V_0 \otimes V_1]^{Sp(V_1)}$. By the first fundamental theorem of Invariant Theory $\mathbb{C}[V_0 \otimes V_1]^{Sp(V_1)}$ is generated by quadratic functions of the form
\begin{equation}
    Q_{ij}(v_0 \otimes v_1, v_0' \otimes v_1') = \langle (e_i,v_0) v_1, (e_j,v_0')v_1'\rangle
\end{equation}
and $Q_{ij}= q_0^*(E_{ij}^{\vee})$.\\
Now let $m\ge 2n+1$ we can take an element of the following form $A= v_1\otimes e_1 + \dots+ v_{2n}\otimes e_{2n}$ where $e_i$ is a basis of $V_1$ in a canonical form and $v_i$ are some elements from $V_0$. It is easy to check that
\begin{equation}
    AA^t = \sum_{1\le i,j \le 2n} (v_i,v_j) E_{ij} \cdot J
\end{equation}
where $J$ is the symplectic form. If we choose $v_i=e_i + ie_{i+n}, v_{n+i}=\lambda_i (e_i-ie_{i+n})$ then we will get any element from the Cartan subalgebra, so $q_1$ is dominant. By the first fundamental theorem of Invariant Theory for $SO(V_0)$ invariants of $\mathbb{C}[V_0 \otimes V_1]$ are generated by analogous quadratic functions and the determinant, but since $\dim(V_1) < \dim(V_0)$ the determinant is identically zero and thus $\mathbb{C}[V_0 \otimes V_1]^{SO(V_0)}=\mathbb{C}[V_0 \otimes V_1]^{O(V_0)}$. By repeating the argument as for $Sp(V_1)$ we get the statement of the lemma.
\end{proof}

\subsection{Resolution of the odd nilpotent cone}
\begin{defn}
  \label{defn1}
  An operator $A$ respects complete self-orthogonal flags iff:\\
a) $AF_0^{(i)}\subset F_1^{(i-1)}, 1\le i\le 2n+1$ in the case $m=2n+1$.\\
b) $AF_0^{(i)}\subset F_1^{(i-1)}, 1\le i\le n, AF_0^{(i+1)}\subset F_1^{(i-1)},n+1\le i\le 2n+2$ in the case $m=2n+2$.\\
c) $AF_0^{(i)}\subset F_1^{(i)}, 1\le i\le n, AF_0^{(i)}\subset F_1^{(i-1)},n+1\le i\le 2n$ in the case $m=2n$.\\
d) $AF_0^{(i)}\subset F_1^{(i)}, 1\le i\le 2n-1$ in the case $m=2n-1$.\\
Note that since $\langle Av,w \rangle=(v,A^* w), v\in V_0, w\in V_1$ (which can be checked on the basis vectors of $V_0$ and $V_1$) these conditions are equivalent to the following:\\
a) $A^* F_1^{(2n+1-i)} \subset F_0^{2n+1-i}$ in the case $m=2n+1$.\\
b) $A^*F_1^{(2n+1-i)}\subset F_0^{(2n+2-i)} (i\le n), A^*F_1^{(2n+1-i)}\subset F_0^{(2n+1-i)} (i>n)$ in the case $m=2n+2$.\\
c) $A^*F_1^{(2n-i)}\subset F_0^{(2n-i)} (i\le n),A^* F_1^{(2n+1-i)}\subset F_0^{(2n-i)} (i>n)$ in the case $m=2n$.\\
d) $A^*F_1^{(2n-1-i)}\subset F_0^{(2n-i)}$ in the case $m=2n-1$.
\end{defn}

Thus we see that the conditions in Definition~\ref{defn1} imply the nilpotency of $A$. Additionally, one can compare this definition with~\cite[Definition 5]{GL} and see that the element $(A,A^*)\in \Hom(V_0,V_1)\times \Hom(V_1,V_0)$ belongs to the odd part of the corresponding mixed Borel subalgebra of $\mathfrak{osp}(m|2n)$.\\
Now let us consider the cases above.
\begin{prop}
  \label{lem4}
  The map $pr_1$ is surjective.
\end{prop}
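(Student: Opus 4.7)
Plan: The strategy is to combine properness of $pr_1$ with equivariance under $SO(V_0) \times Sp(V_1)$ and the irreducibility of $\mathcal{N}$, reducing surjectivity to the existence of a single regular nilpotent with compatible flags in each of the four cases (a)--(d) of Definition~\ref{defn1}. First, $pr_1$ is proper: the subvariety $\widetilde{\mathcal{N}} \subset \mathcal{N} \times \mathcal{B}_0' \times \mathcal{B}_1$ is closed, being cut out by the linear incidence conditions of Definition~\ref{defn1}, and $\mathcal{B}_0' \times \mathcal{B}_1$ is projective. Consequently $\Ima(pr_1)$ is closed in $\mathcal{N}$.

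Next, $pr_1$ is $SO(V_0) \times Sp(V_1)$-equivariant: if $(A,F_0,F_1)\in \widetilde{\mathcal{N}}$, then $(g_1 A g_0^{-1},\, g_0 F_0,\, g_1 F_1) \in \widetilde{\mathcal{N}}$ as well, and since $SO(V_0)$ is connected it preserves the chosen component $\mathcal{B}_0'$ of the flag variety. Therefore $\Ima(pr_1)$ is a closed, group-stable subset of $\mathcal{N}$. By the Kraft--Procesi classification \cite{KP} recalled above, $\mathcal{N}$ has only finitely many orbits, and $\mathcal{N}$ is irreducible by \cite{GL}; hence $\mathcal{N}$ contains a unique dense orbit $\mathcal{O}$. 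A closed, group-stable subset of $\mathcal{N}$ that meets $\mathcal{O}$ must equal $\mathcal{N}$. Thus it suffices to exhibit, in each case (a)--(d), one regular nilpotent $A \in \mathcal{O}$ together with complete self-orthogonal flags $(F_0,F_1)$ respected by $A$ in the sense of Definition~\ref{defn1}.

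For each case I would write down an explicit ``staircase'' operator sending a chosen Witt basis of $V_0$ to a chosen Witt basis of $V_1$, with the flags read off from those bases. For $m = 2n$ and $m = 2n+1$ the required construction is already in \cite{GL}. For the new cases $m = 2n+2$ and $m = 2n-1$ one has to shift the staircase by one step near index $i = n$ to accommodate the asymmetry in the flag conditions in parts (b) and (d) of Definition~\ref{defn1}. The main obstacle is carrying out this construction in the two new cases: the staircase must be chosen so that its image in $V_1$ sits in the right level of the flag despite the ``jump by one'' at index $n$, while still producing a regular nilpotent $A^*A \in \mathfrak{so}(V_0)$ (equivalently, regular $AA^* \in \mathfrak{sp}(V_1)$), so that $A$ indeed lies in $\mathcal{O}$. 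Once the candidate $A$ and flags are written down, verifying the incidence conditions is a finite matrix calculation.
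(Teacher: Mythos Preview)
Your reduction is correct and is a genuinely different route from the paper's own proof of this proposition. The paper proceeds by a direct, exhaustive construction: for \emph{every} nilpotent $A$ it uses the Kraft--Procesi $ab$-diagram classification to decompose $A$ into blocks of types $\alpha,\beta,\gamma,\delta,\epsilon$, and then writes down explicit bases $e_i,f_j$ (block by block, with various case distinctions on relative lengths) from which the required self-orthogonal flags are read off. Your argument instead uses properness of $pr_1$ (closed incidence locus over the projective variety $\mathcal{B}_0'\times\mathcal{B}_1$), equivariance, and irreducibility of $\mathcal{N}$ to reduce surjectivity to hitting a single regular nilpotent; the remaining construction for the regular element is exactly what the paper carries out anyway in its proof of Lemma~\ref{lem5}, so nothing new is needed there. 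Interestingly, the paper itself adopts precisely your argument later, in the proof of Theorem~\ref{thm2} for arbitrary $m,n$, and it also remarks before its explicit proof that the proposition already follows from~\cite[Proposition~8]{GL}.

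What each approach buys: your route is dramatically shorter and avoids the long block-by-block case analysis. The paper's explicit approach, on the other hand, actually produces a point in the Springer fibre $pr_1^{-1}(A)$ for every $A$, not just the regular ones, which is additional information (useful if one wants to study the fibres directly). One small caveat in your write-up: the criterion ``$A^*A$ regular in $\mathfrak{so}(V_0)$'' for $A$ to lie in the open orbit is correct in all four cases $m=2n-1,2n,2n+1,2n+2$, but this is not a tautology and is closely related to Propositions~\ref{prop2} and~\ref{prop3}; it would be cleaner simply to identify the open orbit by its $ab$-type (namely $\alpha_n$, $\alpha_n+\alpha_0$, $\beta_n+\alpha_0$, $\beta_n$ in cases (a)--(d)) and check directly that your staircase $A$ has that type.
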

This proposition
is a consequence of~\cite[Proposition 8]{GL}. However, we give our own explicit proof for $m=2n-1,2n,2n+1,2n+2$ here.
\begin{proof} We will prove this in three cases.\\
\textbf{a)} Let $m=2n+1$. We know from the classification of nilpotent operators by Kraft and Procesi~\cite{KP} and A.~Berezhnoy~\cite{B} that $A$ decomposes into direct sum of blocks of type $\alpha_k,\beta_k,\gamma_k,\delta_k,\epsilon_k$ in Kraft's notation, besides dual vectors are paired in an antidiagonal manner (like on the diagrams below, bigger cases are analogous to the examples):
\begin{equation}
\begin{tikzcd}
A\colon a\rightarrow b                                           & A^*\colon  b\rightarrow a                                      & \alpha_1\colon                                                                   &               &               &   \\
a_1 \arrow[r] \arrow[rrrr, no head, dashed, bend right=49] & b_1 \arrow[r] \arrow[rr, no head, dashed, bend right=49] & a_2 \arrow[r] \arrow[no head, dashed, loop, distance=2em, in=305, out=235] & b_2 \arrow[r] & a_3 \arrow[r] & 0
\end{tikzcd}
\end{equation}
\begin{equation}
    \begin{tikzcd}
\beta_1\colon  & b_1 \arrow[r] \arrow[rr, no head, dashed, bend right=49] & a_1 \arrow[r] \arrow[no head, dashed, loop, distance=2em, in=305, out=235] & b_2 \arrow[r] & 0
\end{tikzcd}
\end{equation}
\begin{equation}
    \begin{tikzcd}
\epsilon_1\colon  & a_1 \arrow[r] \arrow[rd, no head, dashed] & b_1 \arrow[r] \arrow[ld, no head, dashed] & 0 \\
            & b_2 \arrow[r]                             & a_2 \arrow[r]                             & 0
\end{tikzcd}
\end{equation}
\begin{equation}
    \begin{tikzcd}
\gamma_1\colon  & a_1 \arrow[r] \arrow[rrd, no head, dashed] & b_1 \arrow[r] \arrow[d, no head, dashed] & a_2 \arrow[r] & 0 \\
          & a_3 \arrow[r] \arrow[rru, no head, dashed] & b_2 \arrow[r]                            & a_4 \arrow[r] & 0
\end{tikzcd}
\end{equation}
\begin{equation}
\begin{tikzcd}
\delta_2\colon  & b_1 \arrow[r] \arrow[rrrrd, no head, dashed] & a_1 \arrow[r] \arrow[rrd, no head, dashed] & b_2 \arrow[r] \arrow[d, no head, dashed] & a_2 \arrow[r] & b_3 \arrow[r] & 0 \\
          & b_4 \arrow[r] \arrow[rrrru, no head, dashed] & a_3 \arrow[r] \arrow[rru, no head, dashed] & b_5 \arrow[r]                            & a_4 \arrow[r] & b_6 \arrow[r] & 0
\end{tikzcd}
\end{equation}
In other words $\gamma_k,\delta_k,\epsilon_k$ rows span Lagrangian subspaces and dual vectors are obtained by the central symmetry of a row. Now because $m=2n+1$ we can decompose any $a,b$-diagram of a nilpotent $A$ into blocks $\epsilon$, pairs of blocks $\gamma$ and $\delta$, block $\alpha$ and block $\beta$, block $\gamma$ and two blocks of $\beta$, block $\delta$ and two blocks of $\alpha$, so that there would be a remainder of the form $\alpha$ or of the form $\gamma$ and $\beta$. It is necessary and sufficient to find two flags $F_1$ and $F_0$ which will be respected by $A$ as in the definition 1. Let us pick a specific basis in each case described above for a convenient flag description. If we have a block $\gamma_{2k+1}$ and $\delta_{2r}$ we will denote vectors in the upper row of $\gamma$ by increasing index $a_1 \rightarrow b_1 \dots  a_{2k+2}\rightarrow 0$ and vectors in the lower row will be denoted in the similar way (as on the diagram above) $a_{2k+3}\rightarrow \dots$. We will label vectors of $\delta$ in the same way, but with apostrophes.\\
Let us introduce the following vectors:
\begin{equation}
    e_{1} = a_{2k+2},e_{2}=a_{2k+1},\dots,e_{2k+2}=a_1, e_{2k+3}= a_{2r}',e_{2k+4}= a_{2r-1}',\dots,e_{2k+2r+2}= a_{1}' 
\end{equation}
\begin{multline}
    e_{2k+2r+3} = a_{4r}',e_{2k+2r+4}=a_{4r-1},\dots,e_{2k+4r+2}=a_{2r+1}', e_{2k+4r+3}= a_{4k+4},\\
    e_{2k+4r+3}= a_{4k+3},\dots,e_{4k+4r+4}= a_{2k+3}
\end{multline}
and $b$-basis
\begin{multline}
    f_{1}=b_{2k+1},\dots,f_{2k+1}=b_{1},f_{2k+2}=b_{2r+1}',f_{2k+2r+2}=b_{1}',f_{2k+2r+3}=b_{4r+2}',\dots,\\
    f_{2k+4r+3}=b_{2r+2}',f_{2k+4r+4}=b_{4k+2},\dots,f_{4k+4r+4}=b_{2k+2}
\end{multline}
In the case of $\epsilon_k$ we will introduce analogous basis
\begin{equation}
    e_{i}=a_{2k+1-i},f_{i}=b_{2k+1-i}, 1\le i\le 2k
\end{equation}
In other cases the choice of basis is a bit harder because the first half of them must be self-orthogonal. Suppose that we have $\alpha_k,\beta_m$ and $k>0$, in each block $a,b$-basis is enumerated as in the previous cases. We have to start the $a$-basis from the kernel of $A$ in $\alpha_{k}$. However, depending on sizes of blocks $k$ and $r$ we use different technique for the choice. We will demonstrate the procedure on the examples below, which can be easily generalized.\\
1) $a$-chain is longer than $b$-chain:
\begin{equation}
\begin{gathered}
    a_1 \rightarrow b_1 \rightarrow a_2 \rightarrow b_2 \rightarrow a_3 \rightarrow 0\\
    b_1' \rightarrow a_1' \rightarrow b_2' \rightarrow 0\\
    e_{1}=a_3,e_{2}=a_2+ia_1',e_{3}=a_2-ia_1',e_{4}=a_1,\\
    f_{1}=b_2+ib_2',f_{2}=b_2-ib_2',f_{3}=b_1+ib_1',f_{4}=b_1-ib_1'
\end{gathered}
\end{equation}
2) $b$-chain is longer than $a$-chain:
\begin{equation}
\begin{gathered}
    a_1 \rightarrow b_1 \rightarrow a_2 \rightarrow b_2 \rightarrow a_3 \rightarrow 0\\
    b_1' \rightarrow a_1' \rightarrow b_2' \rightarrow a_2' \rightarrow b_3' \rightarrow a_3' \rightarrow b_4' \rightarrow 0\\
    e_{1}=a_3,e_{2}=a_3',e_{3}=a_2+ia_2',e_{4}=a_2-ia_2',e_{5}=a_1',e_{6}=a_1,\\
    f_{1}=b_4',f_{2}=b_2+ib_3',f_{3}=b_2-ib_3',f_{4}=b_1+ib_2',f_{5}=b_1-ib_2',f_{6}=b_1'
\end{gathered}
\end{equation}
The $\alpha_0$ blocks have to receive special treatment, because the are not self-orthogonal. If there are an even number of them, we can count two $\alpha_0$ as an exceptional block $\alpha_0'$,  make dual vectors $a_1+ia_1' \rightarrow 0,a_1-ia_1' \rightarrow 0$ and work with them. If there is an odd nuber of them, we will leave one $\alpha_0$ as a remainder for the whole partition (we will put it in the middle of the flag $F_0$ later). It is easy to see that only two $\beta$ or $\delta$ can be paired with $\alpha_0'$. We make the following choice of basis in the first case:
\begin{equation}
\begin{gathered}
    b_1 \rightarrow a_1 \rightarrow b_2 \rightarrow 0\\
    b_3 \rightarrow a_2 \rightarrow b_4 \rightarrow 0\\
    a_1' \rightarrow  0\\
    a_1'' \rightarrow 0\\
    e_{1}=a_1'+ia_1'',e_{2}=a_1+ia_2,e_{3}=a_1-ia_2,e_{4}=a_1'-ia_1'',\\
    f_{1}=b_2+ib_4,f_{2}=b_2-ib_4,f_{3}=b_1+ib_3,f_{4}=b_1-ib_3
\end{gathered}
\end{equation}
In the second case we take basis analogous to the previous cases. If we have $\gamma$ and two $\beta$, we as always start the construction of the basis from the Lagrangian part of the $\gamma$ then move to the two $\beta$ and use combinations of the form $a_i\pm ia_j',b_i\pm b_j'$. The case of $\delta$ and two $\alpha$ is easier, as we just start from the end of the first $\alpha$, go to the left until the middle vector, then we repeat the same thing with the second $\alpha$, take the combination of the middle vectors $a_i \pm a_j'$ (look at the example of two $\alpha$ for $m=2n+2$) and use a row from $\delta$.\\
Now if there is an $\alpha_k$ in the remainder, we write $e_{i}=a_{2k+2-i},f_{i}=a_{2k+1-i}$. And if we have $\gamma_k, \beta_m$, we start from the row of $\gamma_k$ from $a$, go to the $\beta_m$, continue constructing the basis from the end of the chain to the beginning and finish with the dual part of $\gamma_k$ to its first half (that way the central $a$-element in $\beta_m$ will be situated in the center of $a$-basis).\\
The flag is constructed like follows: we define new vectors $e_i,f_j$ (as in \cite{GL}) by putting vectors of the remainder into the middle of two flags, then we take any block described above, add first half of $a$-vectors to the left of already put vectors and the second half to the right, we repeat the same process with $b$-vectors and then move to the next block. It is easy to check that $A$ respects such two flags. Here is an example,
\begin{equation}
\begin{gathered}
\epsilon_1 +\alpha_0\colon 
    a_1 \rightarrow b_1 \rightarrow 0\\
    b_2 \rightarrow a_2 \rightarrow 0\\
    a_1' \rightarrow  0\\
    F_0\colon  \{0\}\subset \langle a_2 \rangle\subset \langle a_1',a_2 \rangle\subset \langle a_1,a_2,a_1'\rangle\\
    f_1\colon  \{0\}\subset \langle b_2\rangle \subset \langle b_1,b_2\rangle
\end{gathered}
\end{equation}
\textbf{b)} Now let $m=2n+2$ then we can have $\gamma$ or two $\alpha$ as a remainder. In the first case we construct the middle of the flag (again here we show how to construct $e_i',f_j$ basis from \cite{GL}) as follows
\begin{equation}
\begin{gathered}
    a_1 \rightarrow b_1 \rightarrow a_2 \rightarrow 0\\
    a_3 \rightarrow b_2 \rightarrow a_4 \rightarrow 0\\
    e_{1}=a_2,e_{2}=a_1, e_{3}=a_4, e_{4}=a_3, f_{1}=b_1,f_{2}=b_2 \text{ or }\\
    e_{1}=a_2,e_{2}=a_4, e_{3}=a_1, e_{4}=a_3, f_{1}=b_1,f_{2}=b_2
\end{gathered}
\end{equation}
Different choice of basis is responsible for the choice of connectedness component for $F_0$. As in the case of $m=2n+1$ because of the shift at the center the condition $AF_0^{(i+1)}\subset F_1^{(i-1)}, (i>n)$ also holds if we make the same choice of $a,b$-basis. If we have two $\alpha$ in the remainder.
\begin{equation}
\begin{gathered}
    a_1 \rightarrow b_1 \rightarrow a_2 \rightarrow b_2 \rightarrow a_3 \rightarrow 0\\
    a_1' \rightarrow b_1' \rightarrow a_2' \rightarrow b_2' \rightarrow a_3' \rightarrow 0\\
    e_{1}=a_3,e_{2}=a_3', e_{3}=a_2+ia_2', e_{4}=a_2-ia_2',e_{5}=a_1',e_{6}=a_1, f_{1}=b_2,f_{2}=b_2',f_{3}=b_1',\\ f_{4}=b_1 \text{ or }\\
    e_{1}=a_3,e_{2}=a_3', e_{3}=a_2-ia_2', e_{4}=a_2+ia_2',e_{5}=a_1',e_{6}=a_1, f_{1}=b_2,f_{2}=b_2',f_{3}=b_1',\\ f_{4}=b_1
\end{gathered}
\end{equation}
\textbf{c)} Finally, suppose $m=2n$. Every partition can be divided into the following blocks: $\epsilon$, $\alpha$ and $\beta$, $\gamma$ and $\delta$, $\gamma$ and two $\beta$, $\delta$ and two $\alpha$. This time we have to choose new basis for these blocks (we will again demonstrate this procedure by examples). If we have a block $\epsilon$:
\begin{equation}
\begin{gathered}
    a_1 \rightarrow b_1 \rightarrow a_2 \rightarrow b_2 \rightarrow 0\\
    b_3 \rightarrow a_3 \rightarrow b_4 \rightarrow a_4 \rightarrow  0\\
    e_{1}=a_2,e_{2}=a_1, e_{3}=a_4, e_{4}=a_3, f_{1}=b_2,f_{2}=b_1,f_{3}=b_4, f_{4}=b_3 \text{ or }\\
    e_{1}=a_2,e_{2}=a_4, e_{3}=a_1, e_{4}=a_3, f_{1}=b_2,f_{2}=b_1,f_{3}=b_4, f_{4}=b_3
\end{gathered}
\end{equation}
Suppose that we have $\alpha$ and $\beta$ - a case with $\alpha_0$ is not special this time, but lengths of the chains may be different
\begin{equation}
\begin{gathered}
    a_1 \rightarrow b_1 \rightarrow a_2 \rightarrow b_2 \rightarrow a_3 \rightarrow 0\\
    b_1' \rightarrow a_1' \rightarrow b_2' \rightarrow 0\\
    e_{1}=a_3,e_{2}=a_2+ia_1', e_{3}=a_2-ia_1', e_{4}=a_1, f_{1}=b_2',f_{2}=b_2,f_{3}=b_1, f_{4}=b_1' \text{ or }\\
    e_{1}=a_3,e_{2}=a_2-ia_1', e_{3}=a_2+ia_1', e_{4}=a_1, f_{1}=b_2',f_{2}=b_2,f_{3}=b_1, f_{4}=b_1'
\end{gathered}
\end{equation}
or
\begin{equation}
\begin{gathered}
    a_1 \rightarrow b_1 \rightarrow a_2 \rightarrow b_2 \rightarrow a_3 \rightarrow 0\\
    b_1' \rightarrow a_1' \rightarrow b_2' \rightarrow a_2' \rightarrow b_3' \rightarrow a_3' \rightarrow b_4' \rightarrow 0\\
    e_{1}=a_3+ia_3',e_{2}=a_3-ia_3', e_{3}=a_2+ia_2', e_{4}=a_2-ia_2',e_{5}=a_1+ia_1',e_{6}=a_1-ia_1', f_{1}=b_4',\\ f_{2}=b_2 + ib_3',f_{3}=b_2 -ib_3, f_{4}=b_1+ib_2',f_{5}=b_1-ib_2',f_{6}=b_1' \text{ or }\\
    e_{1}=a_3+ia_3',e_{2}=a_3-ia_3', e_{3}=a_2-ia_2', e_{4}=a_2+ia_2',e_{5}=a_1+ia_1',e_{6}=a_1-ia_1', f_{1}=b_4',\\ f_{2}=b_2 + ib_3',f_{3}=b_2 -ib_3, f_{4}=b_1+ib_2',f_{5}=b_1-ib_2',f_{6}=b_1'
\end{gathered}
\end{equation}
If we have $\gamma$ and $\delta$
\begin{equation}
\begin{gathered}
    a_1 \rightarrow b_1 \rightarrow a_2 \rightarrow 0\\
    a_3 \rightarrow b_2 \rightarrow a_4 \rightarrow 0\\
    b_1' \rightarrow a_1' \rightarrow b_2' \rightarrow a_2' \rightarrow b_3' \rightarrow 0\\
    b_4' \rightarrow a_3' \rightarrow b_5' \rightarrow a_4' \rightarrow b_6' \rightarrow 0\\
    e_{1}=a_2',e_{2}=a_1', e_{3}=a_2, e_{4}=a_1,e_{5}=a_4,e_{6}=a_3,e_{7}=a_4',e_{8}=a_3', f_{1}=b_3',\\ f_{2}=b_2', f_{3}=b_1', f_{4}=b_1,f_{5}=b_2,f_{6}=b_6',f_{7}=b_5',f_{8}=b_4', 
\end{gathered}
\end{equation}
besides we can exchange $a_1$ and $a_4$ for the choice of the component.\\
If we have $\gamma$ and two $\beta$ then we choose the basis in the following way
\begin{equation}
\begin{gathered}
    a_1 \rightarrow b_1 \rightarrow a_2 \rightarrow 0\\
    a_3 \rightarrow b_2 \rightarrow a_4 \rightarrow 0\\
    b_1' \rightarrow a_1' \rightarrow b_2' \rightarrow 0\\
    b_1'' \rightarrow a_1'' \rightarrow b_2'' \rightarrow 0\\
    e_{1}=a_1'+ia_1'',e_{2}=a_2, e_{3}=a_1, e_{4}=a_4,e_{5}=a_3,e_{6}=a_1'-ia_1'', f_{1}=b_2'+ib_2'', f_{2}=b_2'-ib_2'',\\ f_{3}=b_1, f_{4}=b_2,f_{5}=b_1'+ib_1'',f_{6}=b_1'-ib_1''
\end{gathered}
\end{equation}
For the choice of the component we can exchange $e_{1}$ with $e_{6}$, $f_{1}$ with $f_{2}$ and $f_{5}$ with $f_{6}$.\\
Finally, suppose we have $\delta$ and two $\alpha$. We will take $\delta_0$ as an example, because bigger cases do not add difficulty
\begin{equation}
\begin{gathered}
     b_1 \rightarrow  0\\
     b_2 \rightarrow  0\\
    a_1' \rightarrow b_1' \rightarrow a_2' \rightarrow b_2' \rightarrow a_3' \rightarrow 0\\
    a_1'' \rightarrow b_1'' \rightarrow a_2'' \rightarrow b_2'' \rightarrow a_3'' \rightarrow 0\\
    e_{1}=a_3',e_{2}=a_3'', e_{3}=a_2'+ia_2'', e_{4}=a_2'-ia_2'',e_{5}=a_1'',e_{6}=a_1',\\ f_{1}=b_1, f_{2}=b_2', f_{3}=b_2'', f_{4}=b_1'',f_{5}=b_1',f_{6}=b_2 
\end{gathered}
\end{equation}
And we can exchange $e_{3},e_{4}$.\\
\textbf{d)} In this case we are left with one block $\beta$ or block $\delta$ with block $\alpha$. However, the choice of basis is somewhat similar to the case a), so we skip the description.
\end{proof}

\begin{lem}
  \label{lem5}
  The map $pr_1$ is birational.
\end{lem}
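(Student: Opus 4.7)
The plan is to exhibit an open dense subset $U\subset\mathcal{N}$ over which $pr_1$ has fibers of cardinality one. Since $\mathcal{N}$ is irreducible with finitely many orbits by Kraft--Procesi~\cite{KP}, there is a unique open orbit $\mathcal{O}_{\reg}\subset\mathcal{N}$, and it will suffice to show that $pr_1^{-1}(A)$ is a single point for every $A\in\mathcal{O}_{\reg}$. Combined with \propref{lem4} (surjectivity) and the irreducibility of $\tilde{\mathcal{N}}$ (which is fibered over $\mathcal{B}_0'\times\mathcal{B}_1$ with vector space fibers $\mathcal{N}_{F_0,F_1}$), this will yield birationality.

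The first step is to identify a convenient representative of $\mathcal{O}_{\reg}$ using the $ab$-diagrams. In each of the four cases $m=2n-1,2n,2n+1,2n+2$ the regular orbit corresponds to the $ab$-diagram with the fewest possible blocks consistent with the orthosymplectic constraints, i.e.\ essentially a single long chain (plus at most the small exceptional remainder block that appeared as the leftover in the proof of \propref{lem4}). For such an $A$ one reads off explicitly the subspaces $\ker A^k\subset V_0$ and $\ker (A^*)^k\subset V_1$, together with their images under $A$ and $A^*$, and one sees that these kernels form strictly increasing chains of the correct dimensions to be candidate components of the flags.

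The second and main step is to show that for a regular $A$ any pair $(F_0,F_1)$ compatible with $A$ in the sense of \ref{defn1} is forced. Fix a case, say $m=2n+1$: the condition $AF_0^{(i)}\subset F_1^{(i-1)}$ gives $F_0^{(i)}\subset A^{-1}F_1^{(i-1)}$, and dually $F_1^{(i)}\subset (A^*)^{-1}F_0^{(i)}$. A downward induction starting from $F_0^{(m)}=V_0$ and $F_1^{(2n)}=V_1$ combined with self-orthogonality and dimension counting then forces $F_0^{(i)}$ and $F_1^{(i)}$ to coincide with specific shifts of $\ker A^{k}$ and $\ker (A^*)^{k}$; for a regular $A$ these kernels have precisely the required dimensions, so each step of the induction determines the next subspace uniquely. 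The cases $m=2n-1,2n,2n+2$ are handled by the same argument with the appropriate shift dictated by \ref{defn1}; in the even cases $m=2n,2n+2$ the middle step $F_0^{(n)}$ is not pinned down by $\ker A^{k}$ alone, but the two possible choices correspond precisely to the two connected components of $\mathcal{B}_0$ (cf.\ \lemref{lem2}), so after restricting to the distinguished component $\mathcal{B}_0'$ uniqueness is restored.

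The main obstacle is, as usual in this paper, the case analysis: one must check in each of a)--d) that the induction genuinely closes (no ambiguity arises from the non-monotone shifts in \ref{defn1}) and that the regular $ab$-diagram actually realizes equality of dimensions at each step. Once this is done, the fiber of $pr_1$ over any $A\in\mathcal{O}_{\reg}$ is a single point, and since $pr_1$ is a proper surjection between irreducible varieties with a generically trivial fiber, it is birational.
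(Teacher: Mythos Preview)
Your approach is essentially the same as the paper's: the paper also proves birationality by showing that the fiber over a regular nilpotent is a single point, treating the four cases $m=2n+1,2n+2,2n,2n-1$ in turn, explicitly naming the regular $ab$-type ($\alpha_n$, $\alpha_n+\alpha_0$, $\beta_n+\alpha_0$, $\beta_n$ respectively) and building the flags step by step from $F_0^{(1)}$ or $F_1^{(1)}$ using exactly the inclusions $F_0^{(i)}\subset A^{-1}F_1^{(i-1)}$ and $F_1^{(i)}\subset (A^*)^{-1}F_0^{(i)}$ you wrote, invoking self-orthogonality to exclude the non-isotropic $\alpha_0$-vector and noting that in the even cases the two choices at the middle step correspond to the two components of $\mathcal{B}_0$. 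The only difference is expository: the paper runs the induction upward from the bottom of the flags rather than ``downward from $F_0^{(m)}$'' as you phrase it, and is more explicit about the regular type in each case.
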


\begin{proof} For the birationality it is sufficient to show that for a regular nilpotent $A$ (with orbit of maximal dimension that is) there exists the unique flag such that $A$ respects it. Suppose $m=2n+1$, a regular nilpotent has the type of $\alpha_n$ (remember that we can directly compute dimensions of the orbit and the null cone)
\begin{equation}
    a_1\rightarrow b_1 \rightarrow \dots \rightarrow b_{2n} \rightarrow a_{2n+1} \rightarrow 0
\end{equation}
Since we have conditions $AF_0^{(1)}\subset F_1^{(0)}=\{0\}$, then $F_0^{(1)} = \langle a_{2n+1} \rangle$. Now $A^*F_1^{(1)}\subset F_0^{(1)}$ therefore $F_1^{(1)}$ has to be equal to $\langle b_{2n}\rangle$. Obviously we can continue this kind of reasoning and the flag is uniquely determined.\\
Let $m=2n+2$ then a regular nilpotent has the form $\alpha_n + \alpha_0$
\begin{equation}
    a_1\rightarrow b_1 \rightarrow \dots \rightarrow b_{2n} \rightarrow a_{2n+1} \rightarrow 0,  a_{2n+2} \rightarrow 0
\end{equation}
In this case $F_0^{(1)}$ lies in the kernel of $A$, however the first half of a flag must be self-orthogonal and $a_{2n+2}$ can not participate in it. This is why a flag is uniquely determined until the vectors $a_{n+2}$ and $b_{n+1}$. Now the only vectors that we can put in the middle component of a flag $F_0^{(n+1)}$ are either $a_{n+1}+ia_{2n+2}$ or $a_{n+1}+ia_{2n+2}$, besides the dual vector to already chosen one will be added in the $F_0^{n+2}$ component (we have strengthened the remark above as we only need to know the first half of the self-orthogonal flag and the connectedness component because of self-orthogonality condition). So this choice depends only on the connectedness component of the flag variety $F_0$.\\
Let $m=2n$. A regular nilpotent has the type $\beta_n + \alpha_0$
\begin{equation}
    b_1 \rightarrow a_1 \rightarrow \dots \rightarrow a_{2n-1} \rightarrow b_{2n} \rightarrow 0, a_{2n} \rightarrow 0
\end{equation}
Here $a$ and $b$ change roles since $AF_0^{(i)} \subset F_1^{(i)},A^*F_1^{(i)} \subset F_0^{(i-1)}(i\le n)$. That is why $F_1^{(1)}=\langle b_{2n}\rangle, F_0^{(1)}=\langle a_{2n-1}\rangle$ (again $a_{2n}$ does not participate because of self-orthogonality) and so on. Again, we can add $a_{n-1}+ia_{2n}$ or $a_{n-1}-ia_{2n}$ in the middle component $F_0^{(n)}$ and this corresponds to the choice of the connectedness component.\\
Finally, let $m=2n-1$. A regular nilpotent has the type $\beta_n$
\begin{equation}
    b_1 \rightarrow a_1 \rightarrow \dots \rightarrow a_{2n-1} \rightarrow b_{2n} \rightarrow 0,
\end{equation}
thus we see that $F_{1}^{1}= \langle b_{2n}\rangle, F_{0}^{1}=\langle a_{2n-1} \rangle$ and so on.
\end{proof}

\begin{rem}
  The cases $m=2n-1,2n+2$ of~Lemma~\ref{lem5} were omitted in~\cite{GS}, and that is why their
  desingularization theorem was only proved for $m=2n$ and $m=2n+1$. We quote Remarque right
  before~\cite[Corollaire 2]{GS}: ``Dans les cas o\`u les rangs des deux composantes de la partie
  paire diff\`erent, cette
  construction ne permet pas de reconstituer un drapeau complet de $V_0\oplus V_1$ car tout
  \'el\'ement nilpotent de l’orbite maximale a un noyau de dimension au moins~2.''
\end{rem}

Combining~Proposition~\ref{lem4} and~Lemma~\ref{lem5} we obtain the following theorem.
\begin{thm}
  \label{thm1}
  The map $pr_1$ is a resolution of singularities in the cases $m=2n-1,2n,2n+\nobreak1,\allowbreak2n+\nobreak2$. \hfill $\Box$
  \end{thm}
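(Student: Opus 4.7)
The plan is simply to assemble the ingredients already in place. Proposition~\ref{lem4} supplies surjectivity of $pr_1$ and Lemma~\ref{lem5} supplies birationality, so only the smoothness of the source $\tilde{\mathcal{N}}$ and the properness of $pr_1$ remain to be checked; combining these four properties gives a resolution of singularities by definition.

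For smoothness, I would observe that the forgetful map $\tilde{\mathcal{N}} \to \mathcal{B}_0' \times \mathcal{B}_1$ is a vector bundle: its fiber over $(F_0, F_1)$ is the linear subspace $\mathcal{N}_{F_0, F_1} \subset \Hom(V_0, V_1)$ cut out by the incidence conditions of Definition~\ref{defn1}. Since $SO(V_0) \times Sp(V_1)$ acts transitively on the chosen connected component $\mathcal{B}_0' \times \mathcal{B}_1$ and the incidence conditions are equivariant with respect to this action, the fibers are linear spaces of the same dimension that fit together into a locally trivial vector bundle over a smooth projective base. Hence $\tilde{\mathcal{N}}$ is smooth.

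For properness, $pr_1$ factors as the closed immersion $\tilde{\mathcal{N}} \hookrightarrow \mathcal{N} \times \mathcal{B}_0' \times \mathcal{B}_1$ followed by the projection onto the first factor, and the latter is proper because $\mathcal{B}_0' \times \mathcal{B}_1$ is projective. Putting the four properties together, $pr_1$ is a proper, birational, surjective morphism from a smooth variety onto $\mathcal{N}$, and so is a resolution of singularities in each of the four cases $m = 2n-1, 2n, 2n+1, 2n+2$.

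No real obstacle remains at this stage: all the delicate combinatorics — the case-by-case Kraft--Procesi block construction of adapted flag-compatible bases and the uniqueness check on the regular orbit — has already been carried out inside Proposition~\ref{lem4} and Lemma~\ref{lem5}, so the present theorem is a formal consequence of combining them with the two standard structural observations above.
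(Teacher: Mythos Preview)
Your proposal is correct and matches the paper's own approach: the paper simply states that the theorem follows by combining Proposition~\ref{lem4} (surjectivity) and Lemma~\ref{lem5} (birationality), marking the statement with a $\Box$. Your explicit verification of smoothness (vector bundle over the smooth projective flag variety) and properness (closed embedding into $\mathcal{N}\times\mathcal{B}_0'\times\mathcal{B}_1$ followed by projection) fills in the standard structural checks that the paper leaves implicit, but the argument is otherwise identical.
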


    \subsection{Arbitrary $m,n$.}
The map $pr_1\colon  \Tilde{\mathcal{N}} \rightarrow \mathcal{N}$ can be defined for arbitrary
values of $m,n$ (see~Remarque right before~\cite[Th\'eor\`eme 1]{GS}): to this end we interpret
the fiber of the vector bundle $\Tilde{\mathcal{N}}$ over $(F_0^{(\bullet)},F_1^{(\bullet)})$ as
the odd part of the mixed Borel subalgebra compatible with
$(F_0^{(\bullet)},F_1^{(\bullet)})$~\cite[D\'efinition 5]{GS}.
One can ask whether this construction gives a resolution of singularities for general
values of $m,n$. Here we provide a counterexample of $m=2n-2$ and demonstrate that $pr_1$ is not a resolution of singularities in this case.\\
    It is easy to see that an element from the maximal orbit of $\mathcal{N}$ has the form $\beta_{n-1}+\beta_{1}$. 
    \begin{equation}
        b_1\rightarrow a_1 \rightarrow \dots \rightarrow a_{2n-3} \rightarrow b_{2n-2} \rightarrow 0, b_{2n-1}\rightarrow a_{2n-2} \rightarrow b_{2n}\rightarrow 0
    \end{equation}
    Now, if we start constructing $e_i',f_j$-basis as in \cite{GL}, we notice that the choice of order of $a_{n-1}\pm ia_{2n-2}$ vectors represents the choice of $\mathcal{B}_0$ component and we can choose arbitrary element $F_{1}^{(1)}=\langle \lambda b_{2n-2}+\mu b_{2n} \rangle, (\lambda:\mu)\in \mathbb{P}^1$ - in every case the operator $A$ preserves this flag. Therefore, $pr_1$ has fibers isomorphic to $\mathbb{P}^1$ over the open orbit in $\mathcal{N}$, so $pr_1$ can not be birational.\\
    Other cases are quite similar to the case above. Indeed, by dimension comparison via the formula provided in 7.1 here \cite{KP} (and by another useful formula in Corollary 6.1.4 here \cite{DC}) one can prove that in the case $m=2k+1<2n-1$ maximal nilpotent element has the type $\beta_{k+1}+(n-k-1)\delta_0$, in the case $m=2k<2n-1$ it has the type $\beta_k+ \beta_1+ (n-k-1)\delta_0$ and in the case $m>2n+2$ it has the type $\alpha_n+(m-2n-1)\alpha_1$. So we see that in all these cases we have the same ambiguity in the choice of $F_{1}^{(1)}$ ($m<2n-1$) or $F_{0}^{(1)}$ ($m>2n+2$). Therefore $pr_1$ can not be birational for $m$ not equal to $2n-1,2n,2n+1,2n+2$.\\
    Therefore in order to produce a resolution of singularities in the general case we need to alter our construction. We consider three general cases.\\
    
    \begin{defn}
  \label{defn2}
    \textbf{a)} If $m=2k<2n-1$ then we consider a closed subset (also a vector bundle) in $Hom(V_0,V_1)\times \mathcal{B}_0' \times \mathcal{B}_{1,k}$ cut out by equations
    \begin{equation}
        AF_{0}^{(i)} \subset F_1^{(i)}, A^* F_1^{(i)}\subset F_0^{(i-1)}, i\le k
    \end{equation}
    and dual relations obtained from these, where $\mathcal{B}_{1,k}$ is the variety of partial self-orthogonal flags $F_1^{(i)}, i\le k$ and $i\ge 2n-k$ in $V_1$. Note that the conditions above guarantee that $A$ is a nilpotent operator since operator $AA^*$ realizes full flag chain from $F_0^{(2k)}$ to $F_0^{(0)}$.\\
    
    \textbf{b)} If $m=2k+1<2n-1$ then we consider a closed subset in $Hom(V_0,V_1)\times \mathcal{B}_0' \times \mathcal{B}_{1,k+1}$ cut out by equations
    \begin{equation}
        AF_{0}^{(i)} \subset F_1^{(i)}, A^* F_1^{(i)}\subset F_0^{(i-1)}, i\le k+1
    \end{equation}
    and dual relations obtained from these. The conditions also guarantee that $A$ is nilpotent since we have a chain from $F_0^{(2k+1)}$ to $F_0^{(k)}$ and then we have a chain from $F_0^{(k+1)}$ to $F_0^{(0)}$.\\
    
    \textbf{c)} If $m> 2n+2$ then we consider a closed subset in $Hom(V_0,V_1)\times \mathcal{B}_{0,n} \times \mathcal{B}_{1}$ cut out by equations
    \begin{equation}
        AF_{0}^{(i)} \subset F_1^{(i-1)}, A^* F_1^{(i)}\subset F_0^{(i)}, i\le n
    \end{equation}
    and dual relations obtained from these, where $\mathcal{B}_{0,n}$ is the variety of partial self-orthogonal flags $F_0^{(i)}, i\le n$ and $i\ge m-n$ in $V_0$. Note that the conditions above guarantee that $A$ is a nilpotent operator since operator $A^*A$ realizes full flag chain from $F_1^{(2n)}$ to $F_1^{(0)}$. Additionally this time the variely $\mathcal{B}_{0,n}$ has only one component.
\end{defn}
    Then we have the following theorem.
    \begin{thm}
  \label{thm2}
  The new projection map to the first component  $pr_1'$ in the context of Definition~\ref{defn2} is a resolution of singularities of $\mathcal{N}$.
  \end{thm}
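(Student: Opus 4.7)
The plan is to verify the three standard criteria for a resolution of singularities separately: smoothness of $\Tilde{\mathcal{N}}'$, properness of $pr_1'$, and birationality. Smoothness and properness are formal consequences of the construction; birationality is the substantive point and is the reason for introducing partial flags.

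For smoothness I would observe that in each of the cases (a)--(c) of~Definition~\ref{defn2}, the defining conditions on $(A,F_0,F_1)$ are linear in $A$ for fixed flags and equivariant under the action of the stabilizer on the base. Hence the natural projection of $\Tilde{\mathcal{N}}'$ onto the (partial) flag product is a vector bundle over a product of smooth homogeneous (partial) flag varieties, so $\Tilde{\mathcal{N}}'$ is smooth and irreducible (this is, in fact, asserted in~Definition~\ref{defn2}). Properness of $pr_1'$ is immediate: it factors through the closed embedding $\Tilde{\mathcal{N}}'\hookrightarrow\mathcal{N}\times(\text{flag product})$ followed by the projection onto the first factor, and the (partial) flag factors $\mathcal{B}_0',\mathcal{B}_{1,k},\mathcal{B}_{0,n},\mathcal{B}_1$ are all projective.

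Next I would establish surjectivity of $pr_1'$ onto $\mathcal{N}$ by re-using the case-by-case basis constructions in the proof of~\propref{lem4}. For any $A\in\mathcal{N}$ those constructions produce complete self-orthogonal flags compatible with $A$ in the sense of~Definition~\ref{defn1}, and the containments stipulated there are at least as strong as those of~Definition~\ref{defn2} at the indices retained by the partial flag. Truncation therefore yields a point of $\Tilde{\mathcal{N}}'$ above $A$; conversely, nilpotency of any $A$ satisfying the partial-flag conditions is automatic, as observed in each item of~Definition~\ref{defn2}.

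The main obstacle is birationality. Mimicking the strategy of~\lemref{lem5}, I would fix a representative of the open orbit of $\mathcal{N}$ and show that the compatible partial flag above it is uniquely determined (up to the connected-component choice for $\mathcal{B}_0'$ in cases (a), (b); the base $\mathcal{B}_{0,n}$ in case (c) is connected, as noted). The relevant maximal orbits are of types $\beta_k+\beta_1+(n-k-1)\delta_0$, $\beta_{k+1}+(n-k-1)\delta_0$, and $\alpha_n+(m-2n-1)\alpha_1$. Starting from $F_0^{(0)}=0$ (resp.\ $F_1^{(0)}=0$ in case (c)) and building inductively via the combined conditions $AF_0^{(i)}\subset F_1^{(i)}$, $A^*F_1^{(i)}\subset F_0^{(i-1)}$, and self-orthogonality, each new step is forced by the structure of $\ker(A^*A)$ (respectively $\ker(AA^*)$) intersected with the isotropy constraint, as a block-by-block inspection of the Kraft-Procesi decomposition of the regular element should confirm. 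The key observation is that the partial-flag data was crafted precisely so that the positive-dimensional ambiguity exhibited in the complete-flag counterexample preceding~Definition~\ref{defn2} is eliminated --- either because the ambiguous index is omitted from the partial flag (case (c)), or because the extra requirement $AF_0^{(i)}\subset F_1^{(i)}$ forces $F_1^{(i)}$ to contain $AF_0^{(i)}$ and so pins down the otherwise free choice of isotropic line inside $\ker A^*$. Together with smoothness, properness, and surjectivity, this generic injectivity yields the theorem.
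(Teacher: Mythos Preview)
Your treatment of smoothness, properness, and especially the birationality step over the open orbit closely mirrors the paper's own proof. There is, however, a gap in your surjectivity argument: you propose to truncate the explicit flag constructions from the proof of \propref{lem4} and compare with Definition~\ref{defn1}, but those constructions and that definition are only made explicit for $m\in\{2n-1,2n,2n+1,2n+2\}$, which are precisely the cases \emph{not} covered by \thmref{thm2}. For the ranges in Definition~\ref{defn2} there is no explicit basis construction in the paper to truncate from, and the paper itself remarks that ``an explicit direct proof seems to be cumbersome, so we do not provide it here.''

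The paper's route for surjectivity is instead the indirect one, for which you already have all the ingredients. Your birationality analysis in particular exhibits a point of the resolution over every regular nilpotent, so the image of $pr_1'$ contains the open dense orbit of $\mathcal{N}$; since you have established that $pr_1'$ is proper, its image is closed and therefore equals $\mathcal{N}$. Replacing your truncation paragraph with this two-line observation brings the proposal into agreement with the paper's proof.
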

    \begin{proof}
    Following the proof of Theorem~\ref{thm1} we first check the uniqueness of flags in the preimage of a regular element in $\mathcal{N}$ under the map $pr_1'$.\\
    
    \textbf{a)} As we observed before a regular nilpotent element has the type $\beta_k+ \beta_1+ (n-k-1)\delta_0$. Then $F_1^{(1)}$ has to be spanned by a vector from $\ker A^*$ and lying in the image of $A$. The only possibility here is the rightmost vector from $\beta_k$ chain. Then it is easy to see that we can lift this chain uniquely up to $F_0^{(k-1)}$. And now in order to satisfy self-orthogonality condition we can add to $F_0^{(k)}$ either $a_{k}+ia_{2k}$ or $a_{k}-ia_{2k}$ (for the notation please see Proposition~\ref{lem4}). This choice represents the choice of component in $\mathcal{B}_0$. The rest of the flag is fixed by orthogonality conditions.\\
    
    \textbf{b)} In this case a regular nilpotent has the type $\beta_{k+1}+(n-k-1)\delta_0$. The $F_1^{(1)}$ is also fixed uniquely due to the same reasons as in the case a). The rest of the chain also lifts uniquely.\\
    
    \textbf{c)} In this case a regular nilpotent has the type $\alpha_n +(m-2n-1)\alpha_1$. This time we can fix uniquely $F_0^{(1)}$ and the rest of the chain lifts as in the previous cases.\\
    
    Therefore the map $pr_1'$ is birational in all these cases. We can also say that $pr_1'$ is surjective onto $\mathcal{N}$ since the vector bundles clearly map onto all regular elements in $\mathcal{N}$, but we know from \cite{GL} that the orbit of all regular nilpotent elements is open and dense in $\mathcal{N}$. We can clearly see that $pr_1'$ is a proper map, thus surjectivity follows. Unfortunately an explicit direct proof seems to be cumbersome, so we do not provide it here.
    \end{proof}
    
\section{Regular odd elements}
\label{sec3}

This section is devoted to description of regular elements in $\Hom(V_0,V_1)\cong V_0 \otimes V_1$.

\subsection{Weierstra\ss\ section}
As we already know, $\mathcal{N}$ contains an open orthosymplectic orbit, let us choose an element $u$ from it. 
\begin{equation}
    u=\sum_{i\le 2\lceil m/2 \rceil -1} e_{m+1-i}\otimes e_{i+1} (m\le 2n), u=\sum_{i\le 2n} e_{m+1-i}\otimes e_{i} (m\ge 2n+1)
\end{equation}
One can consider the normal bundle $L$ to $\mathcal{N}$ at $u$ which is identified with a vector subspace of $V_0 \otimes V_1$. We will prove the following proposition.\\
\begin{prop}
  \label{prop1}
  The variety $\Sigma :=u+L$ is a Weierstra\ss\ section.
  \end{prop}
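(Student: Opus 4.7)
The goal is to show that the restriction $\pi|_\Sigma$ of the categorical quotient map $\pi\colon V_0\otimes V_1 \to (V_0\otimes V_1)/\!\!/G$, where $G=SO(V_0)\times Sp(V_1)$, is an isomorphism of affine varieties. My plan has three ingredients: a dimension match, infinitesimal bijectivity at $u$, and a contracting $\mathbb{G}_m$-action that promotes local \'etaleness to a global isomorphism.

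First, \lemref{lem3} identifies $(V_0\otimes V_1)/\!\!/G$ with $\mathfrak{so}(V_0)/\!\!/SO(V_0)$ or $\mathfrak{sp}(V_1)/\!\!/Sp(V_1)$, whose dimension equals the appropriate rank $r$. Since $u$ lies in the open $G$-orbit of $\mathcal{N}$, we have $T_u\mathcal{N}=\mathfrak{g}\cdot u$ and $\dim\mathcal{N}=2mn-r$, so the normal space $L$ has dimension $r=\dim (V_0\otimes V_1)/\!\!/G$. Moreover, the kernel of $d\pi_u$ is precisely $T_u\mathcal{N}=\mathfrak{g}\cdot u$, and because $L$ is by construction complementary to this subspace, $d\pi_u$ restricts to an isomorphism $T_u\Sigma=L \xrightarrow{\sim} T_0\bigl((V_0\otimes V_1)/\!\!/G\bigr)$. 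Hence $\pi|_\Sigma$ is \'etale at $u$.

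To globalize, I attach to $u$ a cocharacter $\lambda\colon \mathbb{G}_m\to G$ arising from the regular even nilpotent $u^*u\in\mathfrak{so}(V_0)$ (in the case $m\le 2n+1$) or $uu^*\in\mathfrak{sp}(V_1)$ (in the case $m\ge 2n+1$); equivalently, $\lambda$ is the cocharacter associated to an $\mathfrak{osp}(1|2)$-triple containing $u$. After a suitable normalization one has $\lambda(t)\cdot u = t^{-1}u$, so the rescaled action $\rho(t)\cdot A := t\cdot(\lambda(t)\cdot A)$ fixes $u$. Choosing $L$ to be the $\lambda$-stable complement of $\mathfrak{g}\cdot u$, the action $\rho$ then preserves $\Sigma$ and acts with strictly positive weights on $L$, so that $\rho$ contracts $\Sigma$ to $\{u\}$. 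On the target, $\rho$ acts with positive weights as well, because the $G$-invariants are generated by the quadratic functions $Q_{ij}$ from the proof of \lemref{lem3}, and $\pi|_\Sigma$ is $\rho$-equivariant. The standard principle that a $\mathbb{G}_m$-equivariant morphism of positively-graded affine varieties which is \'etale at the unique fixed point must be an isomorphism then concludes the argument. The main technical hurdle is to verify that a $\lambda$-stable complement $L$ to $\mathfrak{g}\cdot u$ with strictly positive $\rho$-weights really exists; this reduces to a direct $\lambda$-weight comparison on $V_0\otimes V_1$ and $\mathfrak{g}\cdot u$, in the spirit of the Jordan-block computations from the proof of \lemref{lem5}.
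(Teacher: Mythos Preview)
Your overall strategy coincides with the paper's: both fix $u$ by a one-parameter subgroup (the paper writes down an explicit rational semisimple $h\in\mathfrak{so}(V_0)\oplus\mathfrak{sp}(V_1)$ with $hu=2u$, which is exactly the grading element of your $\mathfrak{osp}(1|2)$-triple), choose $L$ to be an $h$-stable complement of $T_u\mathcal{N}$, and then use the contracting $\mathbb{G}_m$-action on $\Sigma$ together with a local statement at $u$ to conclude that $\pi|_\Sigma$ is a global isomorphism.

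There is, however, a genuine gap in your second paragraph. You assert that ``the kernel of $d\pi_u$ is precisely $T_u\mathcal{N}=\mathfrak{g}\cdot u$'' and deduce from this that $d\pi_u|_L$ is an isomorphism. Only the inclusion $\mathfrak{g}\cdot u\subseteq\ker d\pi_u$ is automatic (invariants are constant along orbits); the reverse inclusion is equivalent to the surjectivity of $d\pi_u$, i.e.\ to $\pi$ having maximal rank at $u$. Your dimension count $\dim L=r$ does not help: it shows $d\pi_u|_L$ is an isomorphism \emph{once} you know it is injective, but injectivity of $d\pi_u|_L$ is again the statement $\ker d\pi_u\subseteq\mathfrak{g}\cdot u$. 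The paper does not take this for granted either; it explicitly invokes \cite[Theorem~8.13]{PV} for the maximality of the rank. Without that citation, or an independent verification that the differentials of the generating invariants are linearly independent at $u$, your \'etaleness claim at $u$ is unjustified.

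Regarding the technical hurdle you flag (strict positivity of the $\rho$-weights on $L$): the paper's argument for this is short and worth knowing. Since $u$ has $h$-eigenvalue~$2$, the linear subspace $U=\bigoplus_{\lambda>0}V_\lambda(h)$ contains $u$; moreover every element of $U$ is retracted to~$0$ by $t^h$ as $t\to 0$, so $U\subseteq\mathcal{N}$ and hence $U=T_uU\subseteq T_u\mathcal{N}$. Consequently every $h$-eigenvalue on the complement $L$ is $\le 0$, which translates into your $\rho$-weights being $\ge 1$. No Jordan-block computation is needed.
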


\begin{proof}
  There exists a rational semisimple element $h\in \mathfrak{so}(V_0)\oplus \mathfrak{sp}(V_1)$ such that $hu=2u$. Indeed, we can find such an element by using the formulas for $u$ above.
\begin{equation}
    h=2 \diag(-2k,-2k+2,\dots , 2k) \oplus 2\diag(-2k-1,-2k+1,\dots, 2k+1,0,\dots, 0) (m=2k+1\le 2n)
\end{equation}
\begin{equation}
    h=2 \diag(0,-2k,-2k+2,\dots , 2k) \oplus 2\diag(-2k-1,-2k+1,\dots, 2k+1,0,\dots, 0) (m=2k+2\le 2n)
\end{equation}
\begin{equation}
    h=2 \diag(0,\dots,0,-2n,-2n+2,\dots , 2n) \oplus 2\diag(-2n+1,-2n+3,\dots, 2n-1) (m\ge 2n+1)
\end{equation}
We view $L$ and $T_u \mathcal{N}$ as an $h$-invariant subspace of $V_0\otimes V_1$, such that $L \oplus T_u \mathcal{N} = V_0 \otimes V_1$. Let us denote the eigenspaces of $h$ with eigenvalue $\lambda \in \mathbb{Q}$ as $V_{\lambda}(h)$. Note that the elements from $U:=\oplus_{\lambda>0} V_{\lambda}(h)$ lie in the null cone since they are retracted to $0$ by the action of $h$ and $u\in U$. Therefore $U\subset T_u \mathcal{N}$ and all eigenvalues of $h$ in $L$ are non-positive. In fact, these eigenvalues $c_i$ are equal to $2-2d_i$, where $d_i$ are degrees of generators of $\mathbb{C}[V_0\otimes V_1]^{SO(V_0)\times Sp(V_1)}$. Indeed, if we knew that the rank of $\pi\colon V_0 \otimes V_1 \rightarrow V_0 \otimes V_1/\!\!/SO(V_0)\times Sp(V_1)$ at the point $u$ is maximal, then $df_i$ (differentials of generators of the invariant algebra above) would be linearly independent at $u$ and would annihilate the $T_u\mathcal{N}$. The torus action of $\mathbb{C}^*$ on $V_0\otimes V_1$ as $t^{h-2}$ acts as identity on $u$ and as $t^{c_i-2}$ on the eigenspaces of $L$. On the other hand $\mathbb{C}^*$ acts as $t^{-2d_i}$ of $f_i$ (see Lemma~\ref{lem3}, these $f_i$ are pullbacks of generators of $\mathbb{C}[\mathfrak{so}(V_0)]^{SO(V_0)}$ or $\mathbb{C}[\mathfrak{sp}(V_1)]^{Sp(V_1)}$ under $q_0$ or $q_1$ respectively), from which we get $c_i=2-2d_i$. Since maximality of the rank $\pi$ is an open condition and we have an equivariant action of $\mathbb{C}^*$ on $u+L$ and on $V_0 \otimes V_1/\!\!/SO(V_0)\times Sp(V_1)$, then if we take a limit $t\rightarrow 0$, we will get that $\pi$ is dominant. The differential of $\pi$ is injective at $u$, therefore it is injective everywhere. Moreover, $\pi$ is injective, otherwise from the retracting action in any neighbourhood of $u$ we would have different points with the same value at $V_0 \otimes V_1/\!\!/SO(V_0)\times Sp(V_1)$, which is impossible. It means that the rank of $\pi$ at $u$ is maximal and $u+L$ is a Weierstrass section. The maximality of the rank follows from~\cite[Theorem 8.13]{PV}. All elements of $u+L$ have the orbit of maximal dimension by \cite[Proposition 8.12]{PV}: the values of $f_i$ at an orbit are fixed and $df_i$ are linearly independent due to
the retracting action.\end{proof}

\subsection{Regularity and self-supercommutators}
We define $(V_0\otimes V_1)^{\reg}=SO(V_0)\times Sp(V_1)(u+L)$. For $A\in V_0 \otimes V_1$ recall
$q_0(A)$ and $q_1(A)$ introduced in~Lemma~\ref{lem3}.
\begin{prop}
  \label{prop2}
\textup{a)} Let $m< 2n+1$, then 
\begin{equation}
    q_1(A)\in \mathfrak{sp}(V_1)^{\reg} \Rightarrow A \in (V_0\otimes V_1)^{\reg} \Rightarrow q_0(A)\in \mathfrak{so}(V_0)^{\reg}
\end{equation}
\textup{b)} Let $m \ge 2n+1$, then
\begin{equation}
    q_0(A)\in \mathfrak{so}(V_0)^{\reg} \Rightarrow A \in (V_0\otimes V_1)^{\reg} \Rightarrow q_1(A)\in \mathfrak{sp}(V_1)^{\reg}
\end{equation}
\end{prop}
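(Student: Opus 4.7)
The plan is to establish the four implications (two per part) via complementary techniques: pullback of differentials for the ``outgoing'' direction ($A$ regular $\Rightarrow q_0(A)$ or $q_1(A)$ regular), and stabilizer comparison via $G$-equivariance for the ``incoming'' direction. Throughout, I would use the equivalence from the proof of \propref{prop1}: $A\in V_0\otimes V_1$ is regular iff the differentials $df_1|_A,\ldots,df_r|_A$ of generators of $\mathbb{C}[V_0\otimes V_1]^G$ (with $G = SO(V_0)\times Sp(V_1)$) are linearly independent at $A$, equivalently iff the $G$-orbit of $A$ has maximal dimension; the analogous characterization applies in $\mathfrak{so}(V_0)$ and $\mathfrak{sp}(V_1)$.

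The outgoing direction is handled by pullback of differentials. In case (a), \lemref{lem3} factors the generators as $f_i=q_0^*\tilde f_i$ for generators $\tilde f_i$ of $\mathbb{C}[\mathfrak{so}(V_0)]^{SO(V_0)}$, so $df_i|_A = d\tilde f_i|_{q_0(A)}\circ dq_0|_A$. A linear dependence $\sum c_i\,d\tilde f_i|_{q_0(A)}=0$ would pull back to $\sum c_i\,df_i|_A=0$, contradicting the regularity of $A$; hence $q_0(A)\in\mathfrak{so}(V_0)^{\reg}$. The outgoing direction of (b) is symmetric, using $f_i=q_1^*\tilde g_i$ for generators $\tilde g_i$ of $\mathbb{C}[\mathfrak{sp}(V_1)]^{Sp(V_1)}$.

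For the incoming direction of (a), I would exploit $G$-equivariance of $q_1$ (with $SO(V_0)$ acting trivially on $\mathfrak{sp}(V_1)$): the second-factor projection gives a homomorphism $\mathrm{Stab}_G(A)\to\mathrm{Stab}_{Sp(V_1)}(q_1(A))$ with kernel $\{(g_0,\mathbf{1}):Ag_0^{-1}=A\}$, which is trivial when $\ker A=0$. The hypothesis $q_1(A)=AA^*\in\mathfrak{sp}(V_1)^{\reg}$ forces $q_1(A)$ to have rank at least $2n-1$ (the minimum rank of a regular element of $\mathfrak{sp}(V_1)$, attained by the principal nilpotent), so $\mathrm{rank}(A)\ge 2n-1$. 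For $m\le 2n-2$ this cannot be met, making the hypothesis vacuous; for $m\in\{2n-1,2n\}$ it forces $\ker A=0$, giving $\dim\mathrm{Stab}_G(A)\le n$. A direct computation using \lemref{lem3} shows that the minimum $G$-stabilizer dimension on $V_0\otimes V_1$ equals $n$ in precisely these two cases, so $\dim\mathrm{Stab}_G(A)=n$ is minimal and $A$ is regular. The incoming direction of (b) is symmetric, the non-vacuous range being $m\in\{2n+1,2n+2\}$ with minimum $G$-stabilizer dimension $n$ for $m=2n+1$ and $n+1$ for $m=2n+2$, matching the centralizer dimension of a regular element of $\mathfrak{so}(V_0)$ in each case.

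The main obstacle is the rank-tracking in the incoming direction: one must verify that regularity of $q_i(A)$ in its target forces $A$ to have sufficient rank for the stabilizer injection to be sharp, and that the resulting upper bound matches the minimum $G$-stabilizer dimension computed via \lemref{lem3}. Pleasantly, the non-vacuous range $m\in\{2n-1,2n,2n+1,2n+2\}$ coincides precisely with the cases covered by \thmref{thm1}; outside this range, the regularity hypothesis on $q_i(A)$ cannot be satisfied and the implication holds vacuously.
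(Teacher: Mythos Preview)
Your argument is correct and genuinely different from the paper's. For the outgoing implications ($A$ regular $\Rightarrow$ image regular in the smaller algebra), the paper argues geometrically: it checks directly that $q_0(u)$ (resp.\ $q_1(u)$) is regular, invokes openness of regularity, and then uses the $\mathbb{C}^*$-retraction on the Weierstra\ss\ section $u+L$ to propagate this to all of $(V_0\otimes V_1)^{\reg}=G\cdot(u+L)$. Your pullback-of-differentials argument via \lemref{lem3} is more algebraic and sidesteps the slice entirely; it is shorter, but you should note that the final step (linear independence of $d\tilde f_i$ at $q_0(A)$ implies $q_0(A)\in\mathfrak{so}(V_0)^{\reg}$) is Kostant's criterion for regularity in a reductive Lie algebra. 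For the incoming implications, the paper appeals to the Berezhnoy classification of $SO(V_0)\times Sp(V_1)$-orbits in $V_0\otimes V_1$ to pin down the orbit type of $A$ explicitly. Your stabilizer-injection argument avoids that classification entirely: once the rank bound forces $A$ to be injective (resp.\ surjective), the projection $\mathrm{Stab}_G(A)\hookrightarrow \mathrm{Stab}_{Sp(V_1)}(q_1(A))$ (resp.\ into $\mathrm{Stab}_{SO(V_0)}(q_0(A))$) gives the needed dimension bound. Both routes identify the same non-vacuous range $m\in\{2n-1,2n,2n+1,2n+2\}$.

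One small point to tighten: you freely pass between ``$A$ lies in $(V_0\otimes V_1)^{\reg}:=G\cdot(u+L)$'', ``the $G$-orbit of $A$ has maximal dimension'', and ``the $df_i|_A$ are linearly independent''. The proof of \propref{prop1} establishes only that all three hold on $u+L$ (hence on $G\cdot(u+L)$); your incoming argument concludes that $A$ has minimal stabilizer dimension, and to land in $G\cdot(u+L)$ you still need that each fibre of $\pi\colon V_0\otimes V_1\to V_0\otimes V_1/\!\!/G$ contains a unique orbit of maximal dimension. This follows once one knows the fibres are irreducible, which is standard here, but it is worth a sentence.
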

Here $\mathfrak{sp}(V_1)^{\reg},\mathfrak{so}(V_0)^{\reg}$ are subsets of elements with orbits of maximal dimensions (i.e. $dim(\mathfrak{g})-rk(\mathfrak{g})$ for each Lie algebra $\mathfrak{g}$).
As we will see, we would need to prove the first two implications only in cases $m=2n-1,2n,2n+1,2n+2$, as otherwise images of maps $q_0,q_1$ into bigger Lie algebras do not contain any regular elements.
\begin{proof} It is easy to see that $q_0$ (resp.\ $q_1$) maps $u$ to a regular element (one can check that $q_1(u)$ is also regular when $m=2n+1$). Since regularity is an open condition, the neighbourhood of $u$ and in particular neighbourhood of $u$ in $\Sigma$ maps to a neighbourhood of the regular element.
  From the retracting equivariant action (dilation of the Lie algebra and conjugation by a group element does not change the size of stabilizer of any vector) and dominance of the map we deduce the last two implications.\\
Now, from the classification of A.~Berezhnoy~\cite{B} of all orbits in $V_0 \otimes V_1$ follows that $AA^*$ in the first case or $A^*A$ in the second case can be decomposed (in a basis from the classification) into images of $\alpha,\beta,\gamma,\delta,\epsilon$ blocks as in~\cite{KP} and direct sum with a nondegenerate parts of the following form
\begin{equation}
    \begin{pmatrix}
        \lambda^2 E_k + 2\lambda J_k & 0\\
        0 & -\lambda^2 E_k +2\lambda J_k
    \end{pmatrix},
\end{equation}
where $E_k$ is the identity matrix of size $k\times k$ and $J_k$ is the Jordan block of the same size. So the image of $V_0\otimes V_1$ under the map into the bigger algebra will contain regular elements only if $m=2n-1,2n,2n+1,2n+2$ (to see this one can compute Jordan normal form of the blocks above and notice that there is no way to construct Jordan normal form of a regular element in the bigger Lie algebra from these blocks). In order to show first two implications in cases $m=2n-1,2n,2n+1,2n+2$ we again need to invoke the Berezhnoy classification of orbits in the odd part of $\mathfrak{osp}(m|2n)$. Any regular element in $\mathfrak{so}(V_0)$ has nilpotent part of Jordan type $(2k+1)$ and a non-degenerate part, which size is maximal even and does not exceed $2\lfloor m/2 \rfloor$. Similarly, any regular element in $\mathfrak{sp}(V_1)$ has nilpotent part of Jordan type $(2k)$ and a non-degenerate part, which size is $2n-2k\le 2n$. It is also clear that the size of non-degenerate part of $A$ does not exceed $2n$ and $2\lfloor m/2 \rfloor$. Additionally, non-degenerate Jordan blocks of a regular element in $\mathfrak{so}(V_0),\mathfrak{sp}(V_1)$ can not have the same eigenvalues. From this description it is clear that the type of non-degenerate part of $A$ is fixed uniquely up to $SO(V_0)\times Sp(V_1)$ action. Since a regular element in $\mathfrak{so}(V_0)$ has the type $(2k+1)$ for some $k$ and in $\mathfrak{sp}(V_1)$  it has the type $(2k)$, the nilpotent part of $A$ must be either of type $\alpha$ or of type $\beta$ (and maybe plus some $\alpha_1$ or $\delta_0$ if they can be disregarded in the bigger Lie algebra). Therefore, in the cases $m=2n-1,2n$ the only possibility for nilpotent part of $A$ is to be of type $\beta_{k+1}$, in the case $m=2n+1$ it has to be $\alpha_k$ and in the case $m=2n+2$ it is also $\alpha_k$ as well.\\
\end{proof}

We can also show the following proposition.

\begin{prop}
  \label{prop3}
\textup{a)} Let $m=2n-1,2n,2n+1$, then 
\begin{equation}
    A \in (V_0\otimes V_1)^{\reg} \Rightarrow q_1(A)\in \mathfrak{sp}(V_1)^{\reg}
\end{equation}
\textup{b)} Let $m= 2n+1,2n+2$, then
\begin{equation}
    A \in (V_0\otimes V_1)^{\reg} \Rightarrow q_0(A)\in \mathfrak{so}(V_0)^{\reg}
\end{equation}
\end{prop}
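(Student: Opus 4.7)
The plan is to refine the Berezhnoy-based analysis from the last paragraph of the proof of Proposition~\ref{prop2}, and this time read off the Jordan data of \emph{both} $q_0(A)$ and $q_1(A)$ from the Kraft--Procesi block decomposition of a regular $A$. That analysis already established that, in each of the cases $m\in\{2n-1,2n,2n+1,2n+2\}$, a regular $A$ must have nilpotent part of type $\beta_{k+1}$ (when $m=2n-1,2n$) or $\alpha_k$ (when $m=2n+1,2n+2$), together with a nondegenerate part of the explicit matrix form displayed in Proposition~\ref{prop2} with distinct eigenvalue pairs $\pm\lambda_i$, plus possibly a single extra $\alpha_0$ or $\delta_0$ summand that is inert for the computation that follows.

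The first step is a direct block-by-block computation of $q_0=A^*A$ and $q_1=AA^*$. An $\alpha_k$ block produces a single nilpotent Jordan block of size $2k+1$ for $q_0$ on its $a$-chain and a single nilpotent block of size $2k$ for $q_1$ on its $b$-chain; a $\beta_{k+1}$ block produces Jordan blocks of sizes $2k+1$ for $q_0$ and $2k+2$ for $q_1$. Squaring the explicit nondegenerate matrix displayed in the proof of Proposition~\ref{prop2} shows that each nondegenerate summand contributes to both $q_0(A)$ and $q_1(A)$ a pair of Jordan blocks of equal size with opposite semisimple eigenvalues $\pm\lambda$, and in particular the distinctness of the pairs $\pm\lambda_i$ on the side of $A$ transfers to distinctness of the pairs on the side of $q_0(A)$ and $q_1(A)$.

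The second step is to assemble these contributions and compare the result with the Jordan data of a regular element of $\mathfrak{so}(V_0)$ or $\mathfrak{sp}(V_1)$, i.e.\ a single maximal-size nilpotent block of the correct parity together with distinct semisimple eigenvalue pairs filling out the remaining space. In case (a) with $m=2n-1,2n$ the $\beta_{k+1}$ block contributes the size-$(2k+2)$ nilpotent piece of $q_1(A)$, which is precisely the Jordan block size of the principal nilpotent in $\mathfrak{sp}(V_1)$, and the nondegenerate contributions exhaust the remainder of $V_1$ with distinct eigenvalue pairs. Case~(a) with $m=2n+1$ (where $\alpha_k$ forces a size-$2k$ nilpotent block for $q_1$) and both cases of (b) are strictly analogous. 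The principal obstacle is purely bookkeeping: organizing the four sub-cases and checking that the resulting partition is what regularity in the target classical Lie algebra demands. No new conceptual input is required beyond the classification already invoked in the proof of Proposition~\ref{prop2}.
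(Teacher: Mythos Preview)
Your approach is genuinely different from the paper's: the paper computes the Weierstra\ss\ section $\Sigma=u+L$ explicitly in each case, writes down $q_0$ and $q_1$ on $u+L$ as concrete matrices, and checks by inspection that they are regular. Since $(V_0\otimes V_1)^{\reg}=SO(V_0)\times Sp(V_1)\cdot\Sigma$ by definition and $q_0,q_1$ are equivariant, this suffices. Your route instead tries to read off the Berezhnoy type of an arbitrary regular $A$ and then compute $q_0,q_1$ block by block.

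The block-by-block computation (your ``first step'') is fine, but your step zero has a real gap. You assert that the last paragraph of the proof of Proposition~\ref{prop2} already classifies the Berezhnoy type of every regular $A$. It does not: that paragraph is proving the implications $q_1(A)\in\mathfrak{sp}(V_1)^{\reg}\Rightarrow A$ regular (for $m<2n+1$) and $q_0(A)\in\mathfrak{so}(V_0)^{\reg}\Rightarrow A$ regular (for $m\ge 2n+1$), so its hypothesis is regularity of the image in the \emph{larger} algebra, and from that hypothesis it deduces the type of $A$. What you need is the statement ``$A$ regular $\Rightarrow$ $A$ has type $T$'', which is the reverse direction and is essentially equivalent (via your own block computation) to the very implication you are trying to prove. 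As written, the argument is circular.

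The gap is fillable, but you must say something extra: since $\Sigma\to (V_0\otimes V_1)/\!\!/SO(V_0)\times Sp(V_1)$ is an isomorphism and $(V_0\otimes V_1)^{\reg}=SO(V_0)\times Sp(V_1)\cdot\Sigma$, each fibre of the categorical quotient contains a \emph{unique} regular orbit. Proposition~\ref{prop2} shows that the type-$T$ orbit in each fibre is regular, so it must be \emph{the} regular orbit, and hence every regular $A$ has type $T$. Once you add this sentence, your argument goes through. (A minor correction: the matrix displayed in the proof of Proposition~\ref{prop2} is already $q_0(A)$ or $q_1(A)$ on the nondegenerate block, not $A$ itself, so there is nothing to ``square''.) The paper's explicit computation of $L$ has the advantage of bypassing this orbit-uniqueness step entirely, at the cost of four separate coordinate calculations; your approach, once patched, is more uniform but leans on the abstract properties of the Weierstra\ss\ section.
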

\begin{proof}
The case of $m=2n+1$ is treated in~\ref{prop2}, but we will compute the $L$-space in this case anyway (because it might be useful). It is sufficient to show that in these cases the image of $\Sigma$ is contained in regular elements. For this we need to compute $L$.\\
Let $m=2n$, then w.l.o.g. $u=e_{2n-1}\otimes e_{1}+e_{2n-2}\otimes e_{2}+\dots + e_1 \otimes e_{2n-1}\in V_0 \otimes V_1$, where $e_i$ is a basis with an antidiagonal form in both spaces and $e_{2n}$ is orthonormal to the first $2n-1$ vectors in $V_0$. A small computation shows that $T_u \mathcal{N}=\mathfrak{so}(V_0)\oplus \mathfrak{sp}(V_1) u$ consists of the vectors like this $e_i\otimes e_j - e_j \otimes e_i (i,j <2n), e_{2n}\otimes e_i (i<2n),e_i \otimes e_1 (i<2n), e_{2n-j}\otimes e_{i}-e_{i-1}\otimes e_{2n+1-j} (1<i\le n,j\le n), e_{2n-j}\otimes e_{i}+e_{i-1}\otimes e_{2n+1-j} (i>n, j\le n \text{ or } 2n>j>n,i\le n)$. One can see from such description that $L$ has a basis $e_{2i}\otimes e_{2n}$ and from here
\begin{multline}
    (u+\sum_{i}\lambda_i e_{2i}\otimes e_{2n})(u+\sum_{i}\lambda_i e_{2i}\otimes e_{2n})^* = E_{12}+\dots +E_{n-2,n-1}-E_{n-1,n}-\dots E_{2n-1,2n}+\\+\sum_{i<n}\lambda_i (E_{2n,2n+1-2i}+E_{2i,1}) + \lambda_{n}^2 E_{2n,1},
\end{multline}
which is a regular element.\\
Let $m=2n-1$, then $u$ is defined by the same formula and by the action of the algebra on $u$ we are able to get $V_0 \otimes e_1, e_i\otimes e_j - e_{j}\otimes e_{i}(i,j<2n)$ and analogous to the case above expressons obtained from $\mathfrak{sp}(V_1)$. We can see that $L$ has the same basis $e_{2i}\otimes e_{2n}$, thus we have the same formulas with the regular elements.\\
Let $m=2n+1$, $u=e_{1}\otimes e_{2n+1}+\dots + e_{2n}\otimes e_{2} \in V_1 \otimes V_0$. From the action of $\mathfrak{sp}(V_1)$ we get $e_i \otimes e_{2n+2-j}\pm e_{2n+1-j}\otimes e_{i+1}$ with the same rules as above and $e_{2n+2-j}\otimes e_{i}-e_{i}\otimes e_{2n+2-j}$ from $\mathfrak{so}(V_0)$ (with exceptional situation when $j=1$ - the vectors will be $-e_i\otimes e_{2n+1}$). We can see that $e_i\otimes e_i, i\le n$ is a suitable basis for $L$, then after the projection of the slice we will get
\begin{equation}
    (u+\sum_{i\le n} \lambda_i e_i \otimes e_i)(u+\sum_{i\le n} \lambda_i e_i \otimes e_i)^*= E_{21}+\dots -E_{2n,2n-1}+\sum_{i\le n} 2\lambda_i E_{i,2n+1-i},
\end{equation}
in $\mathfrak{sp}(V_1)$, which is regular and
\begin{equation}
   (u+\sum_{i\le n} \lambda_i e_i \otimes e_i)^*(u+\sum_{i\le n} \lambda_i e_i \otimes e_i) =E_{2n+1,2n}+\dots - E_{21}+\sum_{i\le n}\lambda_i (E_{i,2n+3-i}-E_{i+1,2n+2-i})
\end{equation}
in $\mathfrak{so}(V_0)$, which is also regular.\\
If $m=2n+2$, $u$ is defined by the same formula and to the tangent space we also add $e_{2n+2-i}\otimes e_{2n+2},1<i<2n+2$, so $L$ consists of the same $e_i\otimes e_i, i\le n$ and projection to $\mathfrak{so}(V_0)$ will be the same, but it is still regular for $m=2n+2$.
\end{proof}

\section{Vanishing cohomology}
\label{sec4}
In this section we will use the results of~\cite{DP} in order to give a lower bound for cohomology vanishing of some vector bundles on our variety $\Tilde{\mathcal{N}}$.\\
Recall the context of~\cite[Theorem 3.1]{DP}: if for a semisinple Lie group $G$ we have a parabolic subgroup $P$ and a variety $Z=G \times_P N$, where $N$ is a $B$-submodule of a $G$-module $V$ normalized by $P$, then for a weight $\mu \in \mathfrak{X}^P$ from the character group of $P$ the dual sheaf of sections $\mathcal{L}_Z(\mu)^*$ of the line bundle $G\times_P (N \oplus \mathbb{C}_{\mu}) \rightarrow Z$ has the following property
\begin{equation}
    H^i(Z,\mathcal{L}_Z(\mu)^*)=0, i\ge 1 \text{ if } \mu \gtrdot |\Psi|-|\Delta(\mathfrak{n})|,
\end{equation}
provided that $Z\rightarrow G\cdot N$ is generically finite. Here $|\Psi|$ is the sum of weights in $N$, $|\Delta(\mathfrak{n})|$ is the sum of weights of nilpotent radical for $P$ and $\gtrdot$ is the dominance order.
\begin{prop}
  \label{prop4}
  Let $\epsilon_i$ (resp.\ $\delta_i$) denote the standard basis in the dual Cartan of
  $\mathfrak{so}(V_0)$ (resp.\ $\mathfrak{sp}(V_1)$). Then the lower bound for $\mu$ that
  guarantees the higher cohomology vanishing given
  by~\cite[Theorem 3.1]{DP} is equal to\\
a) $\sum_{i=1}^n (\epsilon_i-\delta_i)$ if $m=2n+1$.\\
b) $\sum_{i=1}^{n-1}\epsilon_i-\sum_{i=1}^n \delta_i$ if $m=2n-1$.\\
c) $0$ if $m=2n$ or $2n+2$.\\
d) $2(n-k)\sum_{i\le k} (\epsilon_i -\delta_i)$ if $m=2k<2n-1$.\\
e) $(2n-2k-1)(\sum_{i\le k}\epsilon_i - \sum_{i\le k+1}\delta_i)$ if $m=2k+1<2n-1$.\\
f) $(m-2n-2)\sum_{i\le n}(\delta_i-\epsilon_i)$ if $m > 2n+2$.
\end{prop}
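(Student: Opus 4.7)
The plan is to apply~\cite[Theorem~3.1]{DP} with $G=SO(V_0)\times Sp(V_1)$, $V=V_0\otimes V_1\cong\Hom(V_0,V_1)$, $P$ the stabilizer in $G$ of the (partial) self-orthogonal flag pair, and $N\subset V$ the $B$-submodule cut out by the incidence conditions of Definition~\ref{defn1} or~\ref{defn2}. Theorems~\ref{thm1} and~\ref{thm2} supply the birationality (in particular the generic finiteness) of $Z=G\times_P N\to G\cdot N=\mathcal{N}$, so the DP bound $\mu\gtrdot |\Psi|-|\Delta(\mathfrak{n}_P)|$ is in force and it only remains to evaluate the two sums in each of the six cases.

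To compute $|\Psi|$, I would fix a weight basis $\{e_k\}_{k=1}^m$ of $V_0$ and $\{f_j\}_{j=1}^{2n}$ of $V_1$ adapted to the standard (partial) self-orthogonal flag, with weights drawn from $\{\pm\epsilon_i,0\}$ and $\{\pm\delta_i\}$ respectively. The coordinate function $A_{jk}$ on $\Hom(V_0,V_1)$ is then a weight vector of weight $\on{wt}(f_j)-\on{wt}(e_k)$, and reading the incidence conditions of Definitions~\ref{defn1} and~\ref{defn2} directly off the flag determines, for each column index $k$, the set of row indices $j$ for which $A_{jk}\in N$. Summing these weights and collecting by $\epsilon_i$ and $\delta_i$ gives $|\Psi|$ as an explicit linear combination. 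For $|\Delta(\mathfrak{n}_P)|$, in cases~(a)--(c) $P$ is a Borel and the sum is the standard $2\rho_{SO(V_0)}+2\rho_{Sp(V_1)}$; in cases~(d)--(f) $P$ is a parabolic whose Levi contains a full middle $Sp_{2(n-k)}$ (respectively $SO_{m-2n}$) factor, and $|\Delta(\mathfrak{n}_P)|$ is the sum of positive roots of $G$ lying outside this Levi, again a short closed-form expression in the $\epsilon_i,\delta_i$.

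Subtraction then yields the formulas in~(a)--(f). For illustration, in~(a) the $\epsilon_i$- and $\delta_i$-coefficients of $|\Psi|$ work out to $2n+2-2i$ and $2n+1-2i$, while $2\rho_{B_n}$ contributes $2n-2i+1$ and $2\rho_{C_n}$ contributes $2n-2i+2$, so the differences are $+1$ and $-1$, summing to $\sum_{i=1}^n(\epsilon_i-\delta_i)$. Case~(c) is the most transparent: for $m=2n$ and $m=2n+2$ every coefficient of $|\Psi|$ matches the corresponding coefficient of $|\Delta(\mathfrak{n}_P)|$ term by term. In case~(d), the unconstrained middle of the $V_1$-partial-flag enlarges the admissible block of $N$, producing extra mass $(2n-2k)\sum_{i\le k}\epsilon_i$ on the $\epsilon$-side of $|\Psi|$; on the $\delta$-side the corresponding contribution vanishes by weight symmetry of the middle rows, and the discrepancy between the Borel-size coefficient $2k-2l+2$ in $|\Psi|$ and the full $Sp_{2n}$-coefficient $2n-2l+2$ in $|\Delta(\mathfrak{n}_P)|$ produces the matching $-(2n-2k)\sum_{l\le k}\delta_l$, giving the prefactor $2(n-k)$. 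Cases~(b), (e), (f) follow the same template.

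The main obstacle I anticipate is the parabolic bookkeeping in~(d)--(f): one must simultaneously track the unconstrained rectangular block of $N$ arising from the unfilled middle of the partial flag and the deficit in $|\Delta(\mathfrak{n}_P)|$ coming from the non-Borel Levi, and verify that the resulting difference indeed lies in the character lattice $\mathfrak{X}^P$ on which the dominance order makes sense (so that only $\epsilon_i,\delta_i$ with $i\le k$, or with $i\le k+1$, survive, and in~(f) only the combinations $\delta_i-\epsilon_i$ with $i\le n$ appear). Once this bookkeeping is correctly set up, the remainder of the argument is a routine weight-sum computation.
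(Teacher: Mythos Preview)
Your proposal is correct and follows exactly the paper's approach: invoke Theorems~\ref{thm1} and~\ref{thm2} for generic finiteness, then compute $|\Psi|-|\Delta(\mathfrak{n}_P)|$ directly. In fact you supply considerably more detail than the paper, whose proof reads in its entirety ``The only thing we need to notice is that the condition of generic finiteness holds true since $\Tilde{\mathcal{N}}\to\mathcal{N}$ is a resolution of singularities. The rest is a straightforward calculation.''
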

\begin{proof}
  The only thing we need to notice is that the condition of generic finiteness holds true since $\Tilde{\mathcal{N}} \rightarrow \mathcal{N}$ is a resolution of singularities. The rest is a straightforward calculation.  
\end{proof}
\begin{rem}
    The positivity properties of orthosymplectic Kostka polynomials~\cite[\S3.3]{BFT} suggest
    a conjecture that $H^{>0}(\Tilde{\mathcal{N}},\mathcal{L}_Z(\mu)^*)=0$ for {\em any}
    dominant weight $\mu$.
  \end{rem}
\selectlanguage{english}

\end{document}